\def\T{\mathcal{T}}
\def\I{\mathcal{I}}
\def\kk{\mathbbm{k}}
\def\R{\mathfrak{R}}
\def\R{\mathcal{R}}
\DeclareMathOperator{\II}{I}
\DeclareMathOperator{\LT}{LT}
\DeclareMathOperator{\BF}{BF}
\DeclareMathOperator{\BT}{BT}
\DeclareMathOperator{\tail}{tail}
\DeclareMathOperator{\ind}{ind}
\newtheorem{theorem}{Theorem}[section]
\newtheorem{lemma}[theorem]{Lemma}
\theoremstyle{definition}
\newtheorem{definition}[theorem]{Definition}
\newtheorem{example}[theorem]{Example}
\newtheorem{remark}[theorem]{Remark}
\begin{document}
\title{Computing All Border Bases for Ideals of Points}
\author{Amir Hashemi \and Martin Kreuzer \and Samira Pourkhajouei}

\address[Amir Hashemi]{Department of Mathematical Sciences, 
Isfahan University of Technology\\ Isfahan, 84156-83111, Iran \and School of Mathematics, Institute for Research in Fundamental
Sciences (IPM), Tehran, 19395-5746, Iran}
\email{Amir.Hashemi@cc.iut.ac.ir}

\address[Martin Kreuzer]{Fakult\"at f\"ur Informatik und Mathematik
 Universit\"at Passau,
 Innstr.\ 33,
 D-94032 Passau,
 Deutschland}
\email{Martin.Kreuzer@Uni-Passau.de}

\address[Samira Pourkhajouei]{Department of Mathematical Sciences, 
Isfahan University of Technology\\ Isfahan, 84156-83111, Iran}
\email{s.pourkhajooei@math.iut.ac.ir}

\date{June 21, 2017}

\begin{abstract}
In this paper we consider the problem of computing all possible order ideals 
and also sets connected to~$1$, and the corresponding border bases, for the 
vanishing ideal of a given finite set of points. 
In this context two different approaches are discussed: 
based on the Buchberger-M\"oller Algorithm~\cite{BM}, we first propose a new 
algorithm to compute all possible order ideals and the corresponding border bases 
for an ideal of points. The second approach involves adapting the Farr-Gao 
Algorithm~\cite{Gao} for finding all sets connected to~$1$, 
as well as the corresponding border bases, for an ideal of points. 
It should be noted that our algorithms are term ordering free. 
Therefore they can compute successfully all border bases for an ideal of points. 
Both proposed algorithms have been implemented 
and their efficiency is discussed via a set of benchmarks. 

\end{abstract}

\maketitle

%%%%%%%%%%%%%%%%%%%%%%%%%%%%%%%%%%%%
%
% Section 1: Introduction
%
%%%%%%%%%%%%%%%%%%%%%%%%%%%%%%%%%%%%

\section{Introduction}

The theory of {\em border bases} is a fundamental tool in computational 
commutative algebra. These bases have been developed mainly for zero-dimensional 
ideals. In this case we can consider them as a generalization of 
{\em Gr\"obner bases}, introduced by B.~Buchberger in his PhD thesis~\cite{Buch-Thesis}, 
which focuses on the structure of the quotient algebra. More precisely, 
border basis theory provides a way to find a structurally stable monomial basis for 
a zero-dimensional quotient ring of the polynomial ring, 
and it yields a special generating set for the ideal, called a border basis.
For particular choices of the monomial basis, the border basis
contains a reduced Gr\"obner basis of the ideal.

Since border bases have been shown to provide good numerical stability
(e.g., see~\cite{Ste} and~\cite{KPR}), they have been explored to 
study zero-dimensional systems with approximate coefficients obtained from empirical
measurements. Several algorithms have been designed for computing border bases, 
for instance the algorithm presented in~\cite{KK} and implemented in 
the {\sc ApCoCoA} computer algebra system (cf.~\cite{ApCoCoA}).
Border bases of zero-dimensional polynomial ideals have turned out to be 
a powerful tool in computer algebra. They have been employed to solve many 
important problems in different fields of mathematics, including linear programming, logic,  coding theory, and statistics. Many authors have worked on this topic, starting 
from the initial papers by M.G.~Marinari, M.~M\"oller and T.~Mora~\cite{zbMATH01273641}
as well as by W.~Auzinger and H.J.~Stetter~\cite{zbMATH04076472}, 
continuing with the contributions by B.~Mourrain~\cite{zbMATH01504686}, 
A.~Kehrein and M.~Kreuzer~\cite{KK}, as well as B.~Mourrain and P.~Tr\'ebuchet \cite{zbMATH06459456}, and a first textbook chapter in~\cite{book}.  
Furthermore, B.~Mourrain and P.~Tr\'ebuchet generalized 
in~\cite{zbMATH01504686,zbMATH06459456,zbMATH06420367} the notion of order 
ideals to sets connected to~$1$  which we shall call
{\em quasi order ideals} (see Section~2).
Based on this definition, they studied a generalized version of border bases, 
which we shall call {\em quasi border bases}, and  their application to
solving polynomial systems. For more details on border bases, we refer to Section~6.4 
in~\cite{book}.

Given a finite set of points, finding the ideal consisting of all polynomials vanishing 
on it, the so-called {\em vanishing ideal} of the set of points, 
has numerous applications both inside and outside of Mathematics, for example 
in statistics, optimization, computational biology, and coding theory. 
Therefore many authors have been interested in studying different aspects of 
computing vanishing ideals of finite sets of points. 
In 1982, B.~Buchberger and M.~M\"oller proposed 
in~\cite{BM} the first specialized algorithm to compute a Gr\"obner basis for 
the vanishing ideal of a set of given points. This algorithm proceeds by 
performing Gaussian elimination on a generalized Vandermonde matrix, 
and it has a polynomial time complexity. In 2006, J.B.~Farr and S.~Gao presented 
in~\cite{Gao} an incremental algorithm to compute a Gr\"obner basis for the
vanishing ideal of a set of points. However, both of these algorithms are numerically 
unstable. To address this problem, in \cite{zbMATH05500871,zbMATH05611766} the authors 
presented numerically stable algorithms to compute a border basis for an ideal of points, 
as well as its application to industrial problems. 

This leads us to the main topic of this paper, namely
to calculate all order ideals, and also all quasi order ideals, as well 
as the corresponding border bases, for an ideal of points. 
Keep in mind that all traditional algorithms to compute border bases 
rely on degree-compatible term orderings, but a zero-dimensional ideal
has border bases with respect to many order ideals which cannot
derived from a term ordering.
Let us review some previous results in this direction. 
In 2013, S.~Kaspar weakened in~\cite{kaspar} the term ordering 
requirement by introducing a {\em term marking strategy} and proposed 
an algorithm which computes border bases which cannot be obtained 
by following a term ordering strategy (see the following example). 
However, he did not provide any algorithm to find all such bases. 
Later, in~\cite{DBLP:journals/corr/BraunPX14}, G.~Braun and S.~Pokutta 
used polyhedral theory and adapted the classical border basis algorithm 
to calculate  all border bases for an ideal of points. 

Let us exhibit an example from~\cite{kaspar} which shows that there 
exists a border basis which cannot be obtained by any algorithm based 
on a term ordering strategy or the algorithm by G.~Braun and S.~Pokutta. 
Let $\mathbb{X}$ be the finite set of points $\{(1, 1),(-1, 1), (0, 0),(1, 0),(0,-1)\}$
in~$\mathbb{Q}^2$. Then the set $\{1, y, y^2, x, x^2\}$ is an order ideal 
for which the vanishing ideal of~$\mathbb{X}$ has a border basis,
namely $\{xy+x^2-1/2y^2-x-1/2y$, $x^3-x$, $x^2y-1/2y^2-1/2y,\, 
xy^2+x^2-1/2y^2-x-1/2y,\, y^3-y\}$. We note that, if we consider any term ordering, 
then the leading term of the first polynomial is either~$x^2$ or~$y^2$. 
However both terms belong to the order ideal. 
Based on the Buchberger-M\"oller Algorithm (see~\cite{BM}) 
and the Farr-Gao Algorithm (see~\cite{Gao}), we propose two different novel 
algorithms to compute, respectively, {\em all order ideals} and 
{\em all quasi order ideals}, and also the corresponding border bases, 
for an ideal of points. We have implemented both algorithms in {\sc Maple} and 
{\sc ApCoCoA} (cf.~\cite{ApCoCoA}). 
Their efficiency is discussed via several explicit examples.

The rest of the paper is organized as follows. In the next section 
we recall basic notations and definitions. 
In Sections~$3$ and~$4$ we discuss our novel approaches based on
the Buchberger-M\"oller Algorithm (resp.\ the Farr-Gao Algorithm)
to compute all order ideals (resp.\ all quasi order ideals) for which a
given ideal of points has a border basis (resp.\ a quasi border basis). 
Furthermore, we illustrate the proposed algorithms with some basic examples. 
The efficiency of the algorithms is discussed in Section~$5$ via a set of benchmarks.

%%%%%%%%%%%%%%%%%%%%%%%%%%%%%%%%%%%%%%
%
% Section 2: Preliminaries
%
%%%%%%%%%%%%%%%%%%%%%%%%%%%%%%%%%%%%%%

\bigbreak
\section{Preliminaries}

In this section we give a brief review of basic definitions and results 
relating to Gr\"obner bases and border bases which will be used in the next sections. 
For further details we refer the reader to~\cite{book}, Section~6.4. 

Throughout this paper we let $\kk$ be a field, let $\R=\kk[x_{1},\ldots,x_{n}]$, 
and let $\T$ be the set of all terms in $ x_{1},\ldots,x_{n}$, i.e.,
\begin{center}
   $\T = \lbrace x_{1}^{\alpha_{1}}\cdots x_{n}^{\alpha_{n}} \mid 
         \alpha_{i}\geq 0,\,1 \leq i \leq n \rbrace.$
\end{center}
Here we assume that $\prec$ is a term ordering on~$\T$, i.e., a total ordering 
on~$\T$ which is multiplicative and a well-ordering. 
For a polynomial $f\in \R\setminus \{0\}$, we define its leading term, 
denoted by $\LT(f)$, to be the greatest term with respect to~$\prec$ 
which occurs in~$f$. Given an ideal $\I\subset \R$, we denote by $\LT(\I)$ 
the ideal generated by all $\LT(f)$ with $f\in \I\setminus \{0\}$. 
For a finite set $F=\{f_1,\ldots ,f_k\}\subset \R$, we write $\LT(F)$ 
for the set $\{\LT(f_1),\ldots ,\LT(f_k)\}$. A finite set $G\subset \R$ 
is called a {\em Gr\"obner basis} for $\I$ w.r.t.~$\prec$ 
if $G\subset \I$ and $\LT(\I)=\langle \LT(g) \ | \ g\in G\rangle $.

\begin{definition} Let $\mathcal{O}$ be a finite subset of~$\T$.
\begin{enumerate}
\item The set $ \mathcal{O}$ is called an {\em order ideal} \ 
if it is closed under divisors, i.e., $ t' \in \mathcal{O} $ and $ t\mid t' $ imply 
$t \in \mathcal{O}$ for all $ t,t' \in \T$. 

\item Given an order ideal $\mathcal{O} \subset \T$ 
and an ideal $\I \subset \R$, we say that~$\I$ 
{\em supports an $\mathcal{O}$-border basis}
if the residue classes of the terms in~$\mathcal{O}$ form a basis 
of~$\R/\I$ as a $\kk$-vector space.

\item If $ \mathcal{O} \subset \T$ is an order ideal, the set
$\partial \mathcal{O} = ( x_{1} \mathcal{O} \cup \cdots \cup x_{n} 
\mathcal{O}) \setminus \mathcal{O}$ is called the {\em border} of~$\mathcal{O}$. 
For the empty order ideal, we define $\partial \mathcal{O}:=\{1\}$.

\end{enumerate} 
\end{definition}
 
\begin{example}
Consider the order ideal $ \mathcal{O} =\lbrace 1,x,y,xy,x^{2},y^{2}\rbrace$ 
in~$\kk[x,y]$. Then the border of~$\mathcal{O}$ is given by
$\partial \mathcal{O} = \lbrace x^{3},x^{2}y,xy^{2},y^{3} \rbrace$. 
We illustrate~$\mathcal{O}$ and its border in the following figure.
\end{example}

\begin{figure}[h!]
 \centering
     \includegraphics[width=0.5\textwidth]{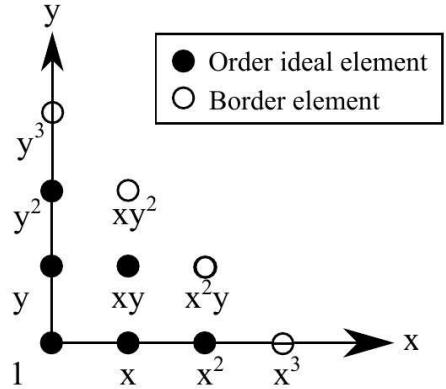}
 \caption{Depiction of an order ideal and its border}
\end{figure}

\begin{definition}\label{def:prebasis}
Let $\mathcal{O}=\lbrace t_{1}, \ldots, t_{\mu}\rbrace\subset\T$ 
be an order ideal and $\partial \mathcal{O}=\lbrace b_{1}, \ldots, b_{\nu}\rbrace $. 
\begin{enumerate}
\item A set of polynomials $G = \lbrace g_{1}, \ldots, g_{\nu}\rbrace\subset \R$ 
is called an $\mathcal{O}$-{\em border prebasis} if every~$ g_{j} $ has the form 
\begin{center}
  $ g_{j}=b_{j}-\sum_{i=1}^{\mu} \alpha_{ij}t_{i}$
\end{center}
where $ \alpha_{ij} \in \kk $.

\item If a polynomial $f \in \R \setminus \{0\} $ has the form  $f=b_{j}-\sum_{i} c_{ij}t_{i}$ with $c_{ij} \in \kk$, $t_i \in \mathcal{O}$ and $b_j \in \partial \mathcal{O}$, we say that $f$ is in $\mathcal{O}$-{\it border prebasis shape}.

\item Let $ \I\subset \R $ be a zero-dimensional ideal, 
and let $ G=\lbrace g_{1}, \ldots, g_{\nu}\rbrace$ be an $\mathcal{O}$-border 
prebasis. Then $G$ is called an $\mathcal{O}$-{\em border basis} of~$\I$  
if $G \subset \I$ and the residue classes of the elements of~$\mathcal{O}$ 
form a $\kk$-vector space basis of~$\R/\I$. 
In this case, the pair $(\mathcal{O},G)$ will be called a {\em border pair}
for~$\I$.

\end{enumerate}
\end{definition}

\begin{example}
Let $\R= \kk[x,y]$ and $\I=\langle x^{2}+2y, y^{2}-3xy+4 \rangle$. The set $\mathcal{O}=\lbrace 1,x,y,xy \rbrace $ is an order ideal and we have $\partial \mathcal{O} =\lbrace x^{2},y^{2},x^{2}y,xy^{2}\rbrace $. It is easy to check that the set $ G=\lbrace y^{2}-3xy+4,x^{2}+2y,xy^{2}+18xy+4x-24,x^{2}y+6xy-8\rbrace $ is an  $ \mathcal{O} $-border basis of $\I$.
\end{example}

In theory of border bases, we also use the concept of a border form which is 
defined as follows.

\begin{definition}
Let $\mathcal{O}=\lbrace t_{1}, \ldots, t_{\mu}\rbrace$ be an order ideal 
in~$\T$. 
\begin{enumerate}
\item For every $t'\in\T$, let $k\ge 0$ be the least number such that there
exists an index $i\in\{1,\dots,\mu\}$ and a term $t''$ of degree~$k$ such 
that $t'=t_i\, t''$. The number~$k$ is called the {\em index} of~$t'$
w.r.t.~$\mathcal{O}$ and denoted by $\ind_{\mathcal{O}}(t')$.

\item For a polynomial $f\in\R\setminus \{0\}$, we let $\ind_{\mathcal{O}}(f)$
be the largest index of a term in its support. Write $f=c_1 t'_1 + 
\cdots + c_s t'_s$ with $c_i\in\kk$ and $t'_i\in\T$. Then 
$\BF_{\mathcal{O}}(f)=  \sum_{\{i \mid \ind(t'_i)=\ind(f)\}} c_i t'_i$
is called the {\em border form} of~$f$.

\item For a polynomial $f=b_{j}-\sum_{i} c_{ij}t_{i} \in\R\setminus \{0\}$, in $\mathcal{O}$-border prebasis shape the term $b_j$ is also called the {\em border term} of~$f$ and denoted by $\BT_{\mathcal{O}}(f)$. Also, if $G$ is a set of polynomials whose elements are in $\mathcal{O}$-border prebasis shape, we denote the set $\{\BT_{\mathcal{O}}(g) \mid g \in G \}$ by $\BT_{\mathcal{O}}(G)$.
\end{enumerate}
\end{definition}

For some properties of the border form, we refer to~\cite{book}, Section 6.4.
Mourrain \cite{zbMATH01504686} introduced a generalization of order ideals, 
namely {sets connected to $1$}. Instead, for more homogeneity, we call
them {\em quasi order ideals}. They are defined as follows.

\begin{definition} Let $\mathcal{O}$ be a finite subset of~$\T$.
\begin{enumerate}
\item The {\em border} $\partial\mathcal{O}$ of $\mathcal{O}$ is defined by $\partial\mathcal{O} = (x_1\mathcal{O} \cup\cdots\cup
x_n\mathcal{O}) \setminus \mathcal{O}$. 

\item The set $\mathcal{O}$ is called a {\em quasi order ideal} 
if for every $t\in \mathcal{O}\setminus \{1\}$ we have  
$t\in \partial (\mathcal{O}\setminus \{t\})$. Also, we define $\widehat{\mathcal{O}}=\mathcal{O} \cup \partial \mathcal{O}$.

\item Given an ideal~$\I$ in~$\R$ and a quasi order ideal~$\mathcal{O}$, 
we say that~$\I$ {\em supports a quasi $\mathcal{O}$-border basis} if
the residue classes of the terms in~$\mathcal{O}$ form a basis 
of~$\R/\I$ as a $\kk$-vector space. 

\item Let $\mathcal{O}=\{t_1,\ldots ,t_\mu \}$ be a quasi order ideal 
and $ \partial \mathcal{O} = \lbrace b_{1}, \ldots, b_{\nu}\rbrace $ its border.
Then a set of polynomials $G=\lbrace g_{1}, \ldots, g_{\nu}\rbrace$ in~$\R$
is called a {\em quasi $\mathcal{O}$-border prebasis} of~$\I$ 
if $G \subset \I$ and if, for every $j\in\{1,\dots,\nu \}$, 
we have $ g_{j} = b_{j}-\sum_{i=1}^{\mu} \alpha_{ij}t_{i}$ 
where $ \alpha_{ij} \in \kk $. Also, if a polynomial $f \in \R \setminus \{0\} $ has the form  $f=b_{j}-\sum_{i} c_{ij}t_{i}$ with $c_{ij} \in \kk$, $t_i \in \mathcal{O}$ and $b_j \in \partial \mathcal{O}$, we say that $f$ is in {\it quasi} $\mathcal{O}$-{\it border prebasis shape}.

\item A quasi $\mathcal{O}$-border prebasis~$G$ is called a 
{\em quasi $\mathcal{O}$-border basis} of~$\I$ if the residue classes of the
terms in~$\mathcal{O}$ form a $\kk$-vector space basis of~$\R/\I$.
In this case the pair $(\mathcal{O},G)$ is also called a {\em quasi 
$\mathcal{O}$-border pair} for~$\I$.
\end{enumerate}
\end{definition}

\begin{example}
Let us consider the ideal $ \mathcal{I} = \langle xy+1/3y^2+x-2/3y-1,\, 
x^2-1/2y^2-x+3/2y,\, y^3-2y^2-3y \rangle$ in~$\R=\mathbb{Q}[x,y]$. Then we have 
$ \dim_{\mathbb{Q}}(\mathcal{R}/\mathcal{I})=4$. We claim that~$\I$ has
a quasi $\mathcal{O}$-border basis for the quasi order ideal 
$\mathcal{O}=\lbrace 1,x,xy,x^2y \rbrace $. To see that the residue 
classes of the terms in~$\mathcal{O}$ form a basis for~$\R/\I$, we 
let~$G$ be the reduced Gr\"obner basis of~$\I$ with respect to a term ordering~$\prec$ 
such that $y \prec x$. We consider a polynomial $f=a+bx+cxy+dx^2y$, 
where $a,b,c,d\in\mathbb{Q}$. Then the normal form of~$f$ w.r.t.~$G$ is a 
linear combination of the terms $1,x,y,y^2$, and (as long as the denominators do not 
vanish) the corresponding coefficients are $a+c+d$, $1/6(6b-6c-6d)$, 
$1/6(4c+13d)$, and $1/6(-2c-5d)$, respectively. The linear system corresponding 
to these linear polynomials has only the trivial solution. This shows 
that the residue classes of the terms in~$\mathcal{O}$ are a basis of $\R/\I$.

The set $\lbrace y,x^2,xy^2,x^3y,x^2y^2 \rbrace $ is the border of~$\mathcal{O}$.
Thus it is easy to check that the polynomials $y-2x^2y+5xy+3x-3$, 
$x^2+x^2y-xy-x$, $xy^2+xy$, $x^3y-x^2y-2xy$, and $x^2y^2+x^2y$ form a 
quasi $\mathcal{O}$-border basis of~$\mathcal{I}$.
\end{example}

To conclude this section, we briefly recall ideals of points.
For further details we refer to~\cite{book}, Section~6.3.

\begin{definition}
Let $\mathbb{X}=\lbrace P_{1},\ldots ,P_{s}\rbrace $ be a finite set 
of distinct points in $\kk^{n}$. Then the {\em vanishing ideal} of~$\mathbb{X}$ 
is defined as 
\begin{center}
   $\II(\mathbb{X}) = \lbrace f \in \R \mid f(P_{1})=\cdots =f(P_s)=0 \rbrace.$
\end{center} 
 
Furthermore, an ideal $\I$ of~$\R$ is called an {\em ideal of points} if there exists
a finite set of points~$\mathbb{X}$ in~$\kk^n$ such that $\I=\II(\mathbb{X})$.
\end{definition}

\begin{example}
Suppose that $\mathbb{X}$ contains only one point 
$P = (a_{1},\ldots,a_{n}) \in \kk^{n}$. Then we have 
$ \II(\mathbb{X})=\langle x_{1}-a_{1},\ldots,x_{n}-a_{n}\rangle$.
\end{example}

\begin{theorem}
Let $ \mathbb{X}=\lbrace P_{1},\ldots ,P_{s} \rbrace \subset \kk^{n} $
be a finite set of points.
\begin{enumerate}
\item The vanishing ideal of~$\mathbb{X}$
satisfies $\II(\mathbb{X}) = \II(P_{1}) \cap \cdots \cap \II(P_{s})$.

\item The ideal $ \II(\mathbb{X})$ is zero-dimensional, and we have
$\R/\II(\mathbb{X}) \cong \kk^{s}$.
\end{enumerate}
\end{theorem}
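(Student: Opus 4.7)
The plan is to handle the two parts separately and deduce (2) from a single evaluation-map argument. Part (1) is essentially unpacking the definition: a polynomial $f\in\R$ lies in $\II(\mathbb{X})$ iff $f(P_i)=0$ for every $i\in\{1,\ldots,s\}$, which is the same as saying $f\in\II(P_i)$ for every $i$, i.e.\ $f\in\II(P_1)\cap\cdots\cap\II(P_s)$. I would write this as a chain of equivalences, making no use of anything beyond the definition of $\II(\mathbb{X})$.

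For part (2), I would introduce the evaluation map
\[
\varphi\colon \R \longrightarrow \kk^{s}, \qquad \varphi(f) = \bigl(f(P_1),\, \ldots,\, f(P_s)\bigr).
\]
This is a $\kk$-algebra homomorphism whose kernel is by definition $\II(\mathbb{X})$. The whole claim in (2) then reduces to showing that $\varphi$ is surjective: once this is established, the first isomorphism theorem gives $\R/\II(\mathbb{X}) \cong \kk^s$ as a $\kk$-algebra, and since $\kk^s$ is finite-dimensional over $\kk$, the quotient is Artinian and hence of Krull dimension zero.

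The main technical step is the surjectivity, and this is where Lagrange-style separating polynomials come in. Since the points $P_1,\ldots,P_s$ are pairwise distinct, for each ordered pair $i\neq j$ there exists an index $k(i,j)\in\{1,\ldots,n\}$ such that the $k(i,j)$-th coordinate of $P_i$ differs from that of $P_j$. For each $i$ I would set
\[
f_i \;=\; \prod_{j\neq i} \frac{x_{k(i,j)} - (P_j)_{k(i,j)}}{(P_i)_{k(i,j)} - (P_j)_{k(i,j)}} \;\in\; \R,
\]
and verify that $f_i(P_i)=1$ while $f_i(P_j)=0$ for $j\neq i$. Hence $\varphi(f_i)$ is the $i$-th standard basis vector of $\kk^s$, and since these generate $\kk^s$ as a $\kk$-vector space, $\varphi$ is surjective. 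The only real subtlety is choosing the separating coordinates $k(i,j)$, which is purely combinatorial and uses nothing beyond the distinctness of the points; I do not anticipate any genuine obstacle beyond bookkeeping. Both the zero-dimensionality and the isomorphism $\R/\II(\mathbb{X})\cong\kk^s$ follow immediately from the surjectivity together with part~(1).
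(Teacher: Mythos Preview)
Your proof is correct and follows the standard textbook route. Note, however, that the paper does not actually prove this theorem: it is stated in the Preliminaries section as a known result, with a reference to \cite{book}, Section~6.3, for details. So there is no ``paper's own proof'' to compare against; your argument via the evaluation map and Lagrange-style separators is exactly the kind of proof one finds in the cited reference.
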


%%%%%%%%%%%%%%%%%%%%%%%%%%%%%%%%%%%%%%%
%
% Section 3: Computing All Border Pairs
%
%%%%%%%%%%%%%%%%%%%%%%%%%%%%%%%%%%%%%%%

\section{Computing All Border Pairs}

In this section we deal with computing the set of all order ideals 
associated to the ideal of points of a given finite set of points. 
Our approach in this section relies on the Buchberger-M\"oller Algorithm~\cite{BM} 
which is an efficient algorithm to compute a Gr\"obner basis for an ideal of points. 
Before we sketch our algorithm, we first recall the classical version of the 
Buchberger-M\"oller Algorithm from \cite[p.~392]{book}. 
It takes as input a finite set of points $\mathbb{X} $ and a term 
ordering $\prec$ and returns the reduced Gr\"obner basis of $\II(\mathbb{X})$. 
Further, a variant of this algorithm outputs a set of terms $\mathcal{O}$ 
so that the residue classes of its elements form  a basis for $\R/\II(\mathbb{X})$ 
as a $\kk$-vector space. In the following we describe a presentation of  
this algorithm in which we use the {\sc DivisionAlgorithm} which receives 
as input a linear polynomial $f$, a set $G=\{g_1,\ldots ,g_m\}$ of linear 
polynomials in $y_1,\ldots ,y_s$ and a term ordering $\prec$ 
with $y_m\prec \cdots \prec y_1$  and returns a pair $p=(r, Q)$ 
where $r$ is normal remainder of $f$ with respect to~$G$ 
and a tuple $Q=[q_1,\ldots ,q_m]$ such that $f=q_1g_1+\cdots +q_mg_m+r$. 
Moreover, the function {\sc NormalForm} computes the normal remainder 
of the {\sc DivisionAlgorithm}. For further details, we refer 
to~\cite{book1}, Section~1.6.

\begin{algorithm}[H]
\caption{{\sc Buchberger-M\"oller}}
\begin{algorithmic}[1]
  \STATE {\bf{Input:}} $ \mathbb{X}= \lbrace P_{1}, \ldots,P_{s} \rbrace \subset \kk^{n}$ and a term ordering $\prec$
 \STATE {\bf{Output:}} The reduced Gr\"obner basis $G$ of $I(\mathbb{X})$  w.r.t $\prec$
 \STATE $G:=\{\}; \mathcal{O}=\{\};M:=\{\};S:=\{\}; L:=\lbrace 1 \rbrace; $
 \WHILE { $L \neq \emptyset$}
\STATE Select and remove $t:=\min_{\prec}(L)$ from $L$
\STATE Let $ P:=(r,Q)=$ {\sc DivisionAlgorithm}$(\sum_{i=1}^{s}t(P_{i})y_{i},M,\prec)$
\IF {$ r= 0  $}
\STATE $ G:=G \cup \{t- \sum _{i=1}^{m} q_{i}s_{i}\}$ where $ S=[s_{1},\ldots,s_{m} ]$ 
\STATE Remove from $L$ the terms which are multiples of $t$
\ELSE
\STATE Add $r$ to $M$
\STATE Add $t- \sum _{i=1}^{m} q_{i}s_{i}$ to $S$ where $ S=[s_{1},\ldots,s_{m}]$
\STATE $\mathcal{O}:=\mathcal{O} \cup \{t\}$
\STATE Add to $L$ those elements of $\lbrace x_{1}t,\ldots,x_{n}t \rbrace $ which are not multiples of an element in $\LT(G)\cup L$
\ENDIF
\ENDWHILE
\STATE {\bf{return}}$(G)$
\end{algorithmic}
\end{algorithm}

Below we discuss some details of this algorithm which are useful to prove 
its termination and correctness (see \cite[page 392]{book}). 
In 2011, Kreuzer and Poulisse \cite{subideal} introduced a variant 
of the Buchberger-M\"oller Algorithm to compute a border basis for an ideal of points. 
In the following, we present a variant of this algorithm for computing 
a border pair for an ideal of points. 

\begin{algorithm}[H]
\caption{{\sc BM-border}}
\begin{algorithmic}[1]
\STATE {\bf{Input:}} $ \mathbb{X}= \{P_1, \dots, P_s \} \subseteq \kk^{n}$ 
      and a term ordering $\prec$

\STATE {\bf{Output:}} A border pair $(\mathcal{O},G)$ for $ \II(X) $
\STATE $G:=\{\}; \mathcal{O}=\{\};M:=[\ ];S:=[\ ]; L:=\lbrace 1 \rbrace; $
  \WHILE {L $\neq \emptyset$}
    \STATE Select and remove from $L$ an element $t$ of minimal degree
    \STATE Let $ P:=(r,Q)=$ {\sc DivisionAlgorithm}$(\sum_{i=1}^{s}t(P_{i})y_{i},M,\prec)$
    \IF {$ r= 0  $}
      \STATE $ G:=G \cup \{t- \sum _{i=1}^{m} q_{i}s_{i}\}$ where $ S=[s_{1},\ldots,s_{m} ]$
    \ELSE
    \STATE Add $r$ to $M$ and $t$ to $\mathcal{O}$
    \STATE Add $t- \sum _{i=1}^{m} q_{i}s_{i}$ to $S$ where $ S=[s_{1},\ldots,s_{m}]$ 
    \STATE $L:=L \cup  \lbrace x_{1}t,\ldots,x_{n}t \rbrace $ 
    \ENDIF
  \ENDWHILE
\STATE {\bf{return}}$(\mathcal{O},G)$
\end{algorithmic}
\end{algorithm}

The following two lemmata are used to  prove the termination and 
correctness of this algorithm.

\begin{lemma}\label{lem1}
With the notations of this algorithm, the set $M$ is a set of linear polynomials 
in $y_1,\ldots ,y_s$ which is a Gr\"obner basis of the ideal generated by 
$\sum_{i=1}^{s}t(P_{i})y_{i}$ for $t\in \mathcal{O}$ according to $\prec$.
\end{lemma}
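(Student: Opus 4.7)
The plan is to prove the lemma by induction on the iterations of the \textsc{while} loop in the \textsc{BM-border} algorithm, maintaining as an invariant both the Gr\"obner basis property claimed in the statement and the auxiliary property that every element of~$M$ is a linear polynomial in $y_1,\ldots,y_s$ whose leading term is one of the variables~$y_j$, with all these leading variables pairwise distinct. The base case (before entering the loop) is trivial since $M=\mathcal{O}=\emptyset$ and the zero ideal admits the empty Gr\"obner basis.

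For the inductive step, fix an iteration with current term~$t$. If the normal remainder $r$ equals~$0$, neither~$M$ nor~$\mathcal{O}$ is altered, so the invariant persists automatically. Suppose $r\neq 0$. I first argue that $r$ is itself linear in $y_1,\dots,y_s$: the input $\sum_{i=1}^{s} t(P_i)y_i$ is linear, each reduction step in \textsc{DivisionAlgorithm} subtracts a scalar multiple of an element of~$M$ (linear by induction), and linearity is preserved. Because $r$ is a normal remainder with respect to~$M$, none of its terms is divisible by any leading term of~$M$; combined with linearity, this forces $\LT(r)=y_{j_0}$ for some variable~$y_{j_0}$ distinct from every leading variable of~$M$. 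Next, the updated ideal $J'$ generated by $\{\sum_i t'(P_i)y_i : t'\in\mathcal{O}\cup\{t\}\}$ coincides with $(M)+(r)=(M\cup\{r\})$, because $r$ differs from $\sum_i t(P_i)y_i$ by an element of the ideal $(M)=J$ provided by the inductive hypothesis. So it remains to check the Gr\"obner basis property of $M\cup\{r\}$ with respect to~$J'$.

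The main (and only nontrivial) point is to verify Buchberger's criterion for the new $S$-pairs $S(m,r)$ with $m\in M$. Here the argument is quick: since $\LT(r)=y_{j_0}$ and $\LT(m)=y_{j}$ with $j\neq j_0$, the leading monomials are distinct variables and hence coprime. By Buchberger's classical criterion on coprime leading terms (see, e.g., \cite{book1}, Section~2.5), every such $S$-polynomial reduces to zero modulo $\{m,r\}$, so $M\cup\{r\}$ is a Gr\"obner basis of~$J'$, and the invariant passes to the next iteration. The hard part of the proof is essentially conceptual rather than computational: recognizing that the algorithm enforces ``Gaussian-elimination-style'' distinctness of leading variables on the $y$-side, which is exactly what reduces the Gr\"obner basis check to the coprime-leading-term case.
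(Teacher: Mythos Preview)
Your proof is correct and follows essentially the same approach as the paper: induction along the algorithm, observing that each newly appended~$r$ is linear with a leading variable distinct from those already in~$M$, and then invoking Buchberger's coprime-leading-term criterion. Your version is simply more explicit about the invariant and about why the updated ideal equals $(M\cup\{r\})$, points the paper leaves implicit.
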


\begin{proof}
We argue by induction on the size $m$ of $M$. By Steps~6 and~10 of the algorithm,
we first add $y_1+\cdots +y_s$ (corresponding to $1$) to~$M$. So, the assertions 
hold when $m=1$. Now, suppose that $M$ is a Gr\"obner basis containing linear polynomials, 
and we consider a linear polynomial $r$. The polynomial $r$ is the normal form 
of a polynomial w.r.t. $M$ when we add $r$ to $M$. 
Using the Buchberger criterion (cf.~\cite[Section 2.5]{book1}), 
since all the polynomials are linear and their leading terms are pairwise coprime, 
it is straightforward to check that the result of adding $r$ to $M$ is 
indeed a Gr\"obner basis. 
\end{proof}

\begin{lemma}\label{lem2}
Suppose that a linear combination of a term $t$ and the elements in $\mathcal{O}$ 
belongs to $\II(\mathbb{X})$. Then we have 
{\sc NormalForm}$(\sum_{i=1}^{s}t(P_{i})y_{i},M,\prec)=0$ and vice versa.
\end{lemma}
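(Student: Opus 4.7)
My plan is to translate everything into a statement about the linear ``evaluation map'' $\phi: \R \to \kk[y_1,\ldots,y_s]$ defined by $\phi(f)=\sum_{i=1}^{s} f(P_i)\, y_i$. This map is $\kk$-linear, and by the very definition of $\II(\mathbb{X})$ we have $f\in \II(\mathbb{X}) \iff \phi(f)=0$. Moreover, by construction of the algorithm, $\phi(t) = \sum_{i=1}^{s} t(P_i) y_i$ is exactly the input to \textsc{DivisionAlgorithm} in Step~6 when the term $t$ is processed. Hence the lemma will reduce to the claim that $\phi(t)$ reduces to zero modulo $M$ if and only if $\phi(t)$ lies in the $\kk$-linear span of $\{\phi(t_j)\mid t_j\in\mathcal{O}\}$.

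To establish the non-trivial equivalence about reductions, I would invoke Lemma~\ref{lem1}: $M$ is a Gr\"obner basis (w.r.t.\ $\prec$) of the ideal $J \subset \kk[y_1,\ldots,y_s]$ generated by $\{\phi(t_j)\mid t_j\in\mathcal{O}\}$. The key observation is that all elements of $M$ and $\phi(t)$ are \emph{homogeneous of degree one} in $y_1,\ldots,y_s$, so any single reduction step of $\phi(t)$ by an element of $M$ produces a polynomial that is again linear or zero (multiplying an element of $M$ by a non-trivial term $y_k$ can never match a linear monomial for cancellation). Consequently \textsc{NormalForm}$(\phi(t),M,\prec)=0$ if and only if $\phi(t)$ lies in the $\kk$-span of $M$, which by Lemma~\ref{lem1} coincides with the $\kk$-span of $\{\phi(t_j)\mid t_j\in\mathcal{O}\}$.

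With that reduction in hand, both implications are one line. For the forward direction, suppose $f = c\,t + \sum_{j} c_j t_j \in \II(\mathbb{X})$ is a non-trivial linear combination involving $t$, i.e., $c\neq 0$ (otherwise, since $\{t_j \bmod \II(\mathbb{X})\mid t_j\in\mathcal{O}\}$ is $\kk$-linearly independent by construction of the algorithm, all $c_j$ would have to vanish as well). Applying $\phi$ gives $c\,\phi(t) = -\sum_j c_j \phi(t_j)$, so $\phi(t)$ lies in the span of $\{\phi(t_j)\}$ and thus \textsc{NormalForm}$(\phi(t),M,\prec)=0$. Conversely, if the normal form vanishes, then $\phi(t) = \sum_j a_j \phi(t_j)$ for some $a_j \in \kk$, whence $\phi\bigl(t - \sum_j a_j t_j\bigr)=0$ and $t - \sum_j a_j t_j \in \II(\mathbb{X})$ is a linear combination of $t$ and elements of~$\mathcal{O}$ of the required form.

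The main obstacle I foresee is purely pedantic rather than mathematical: one must justify why the polynomial \textsc{NormalForm} (against a Gr\"obner basis with respect to $\prec$) coincides here with plain Gaussian elimination on linear forms. Once the degree-preservation argument sketched above is given, Lemma~\ref{lem1} combined with the $\kk$-linearity of $\phi$ carries out both directions symmetrically, and no further work is needed.
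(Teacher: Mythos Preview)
Your proposal is correct and follows essentially the same route as the paper: both use Lemma~\ref{lem1} to identify $M$ as a Gr\"obner basis of the ideal generated by $\{\sum_i u(P_i)y_i \mid u\in\mathcal{O}\}$, and both translate ``linear combination of $t$ and $\mathcal{O}$ lies in $\II(\mathbb{X})$'' into ``$\sum_i t(P_i)y_i$ lies in the $\kk$-span of the evaluation vectors of~$\mathcal{O}$.'' The only difference is that you spell out the degree-one homogeneity argument needed to pass from ideal membership back to $\kk$-linear span in the converse direction, whereas the paper simply declares that direction ``obvious.''
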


\begin{proof}
Suppose that a linear combination of $t$ and the elements in $\mathcal{O}$ 
belongs to $\II(\mathbb{X})$. It follows that $\sum_{i=1}^{s}t(P_{i})y_{i}$ 
is a linear combination of the elements of the set 
$F=\{\sum_{i=1}^{s}u(P_{i})y_{i} \ | \ u\in \mathcal{O}\}$. 
On the other hand, by Lemma \ref{lem1}, the set $M$ is a Gr\"obner basis 
of the ideal generated by the set $F$ and therefore one has 
{\sc NormalForm}$(\sum_{i=1}^{s}t(P_{i})y_{i},M,\prec)=0$. The converse is obvious.
\end{proof}

\begin{theorem}
Given a finite set of points $\mathbb{X}$, algorithm {\sc BM-Border} 
terminates and returns a border pair for $\II(\mathbb{X})$.
\end{theorem}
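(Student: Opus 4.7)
The plan is to verify termination and then check the four properties defining a border pair for $\II(\mathbb{X})$: (i) $G\subset\II(\mathbb{X})$, (ii) $\mathcal{O}$ is an order ideal, (iii) $G$ is an $\mathcal{O}$-border prebasis with $\BT_{\mathcal{O}}(G)=\partial\mathcal{O}$, and (iv) the residue classes of the elements of $\mathcal{O}$ form a $\kk$-basis of $\R/\II(\mathbb{X})$.

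For termination I would combine Lemma~\ref{lem1} and Lemma~\ref{lem2} to observe that every time the $r\ne 0$ branch is executed, the term $t$ just placed in $\mathcal{O}$ contributes a new linearly independent residue class in $\R/\II(\mathbb{X})$. Since $\dim_{\kk}\R/\II(\mathbb{X})=s$, this branch is executed at most $s$ times; each execution adds at most $n$ new entries to $L$, while every iteration of the main loop removes exactly one term from $L$, so the algorithm halts after at most $1+sn$ iterations.

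Property (i) is immediate from Lemma~\ref{lem2}: if $g=t-\sum q_is_i$ is added to $G$ in the $r=0$ branch, then $\sum_{j=1}^{s}g(P_j)y_j=r=0$ and hence $g\in\II(\mathbb{X})$. Linear independence in (iv) is immediate from the $r\ne 0$ criterion and Lemma~\ref{lem2}. For spanning, I would argue by induction on $\deg t$ that the residue class of every $t\in\T$ lies in the $\kk$-span of the residue classes of the elements of~$\mathcal{O}$: if $t$ was processed, this is automatic (either $t\in\mathcal{O}$ or the border relation for $t\in\BT_{\mathcal{O}}(G)$ gives a reduction); if $t$ was never placed into $L$, then some predecessor $t/x_i$ failed to enter $\mathcal{O}$, and descending along such a chain one eventually reaches a processed term, after which the reduction is propagated back up using that each $x_iu$ with $u\in\mathcal{O}$ is itself inserted into $L$ and subsequently classified.

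The main obstacle is property (ii), because processing by minimum degree only obviously produces a quasi order ideal (a set connected to~$1$). I plan to show by induction on $d$ that $\mathcal{O}_{\le d}=\{t\in\mathcal{O}\mid\deg t\le d\}$ is an order ideal. A fresh $t=x_js'\in\mathcal{O}$ of degree $d$ has $s'\in\mathcal{O}_{\le d-1}$; by the inductive hypothesis all divisors of $s'$ already lie in $\mathcal{O}_{\le d-1}$, while the remaining divisors of $t$ have the form $x_js^*$ for some proper divisor $s^*$ of $s'$. Such $x_js^*$ has degree strictly less than $d$ and enters $L$ when $s^*$ is processed, so it has been classified before $t$. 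If it had landed in $\BT_{\mathcal{O}}(G)$, the corresponding congruence $x_js^*\equiv h^*\pmod{\II(\mathbb{X})}$ could be multiplied by $s'/s^*$ to produce $t\equiv(s'/s^*)h^*\pmod{\II(\mathbb{X})}$; combining this with the spanning statement from~(iv) for terms of degree strictly less than $d$ would express the image of $t$ under $f\mapsto\sum_{j=1}^{s}f(P_j)y_j$ as a $\kk$-linear combination of the images of elements of $\mathcal{O}$ present at the moment $t$ is processed, forcing $r=0$ by Lemma~\ref{lem2} and contradicting $t\in\mathcal{O}$. Once (ii) is established, (iii) follows: every element added to $G$ comes from a $t=x_is'\notin\mathcal{O}$ with $s'\in\mathcal{O}$, so $t\in\partial\mathcal{O}$; conversely every $b\in\partial\mathcal{O}$ is forced into $L$, is processed, and, not lying in $\mathcal{O}$, is placed into $\BT_{\mathcal{O}}(G)$.
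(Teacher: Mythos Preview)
Your overall structure mirrors the paper's: both bound $|\mathcal{O}|\le s$ for termination, both verify the basis property via linear independence of evaluation vectors, and both attack the order-ideal property by arguing that a missing divisor $t/x_i\notin\mathcal{O}$ forces $t\notin\mathcal{O}$ through a ``multiply the congruence up'' manoeuvre. The gap is in that last step. When you multiply $x_js^*\equiv h^*\pmod{\II(\mathbb{X})}$ by $s'/s^*$, the terms of $(s'/s^*)h^*$ have degree up to~$d$, not strictly less than~$d$; your appeal to ``the spanning statement from~(iv) for terms of degree strictly less than~$d$'' therefore does not cover all of them. A degree-$d$ term $x_iw$ arising this way may be processed \emph{after}~$t$, so its evaluation vector need not lie in the span available at the moment $t$ is processed, and no contradiction to $t\in\mathcal{O}$ follows. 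The paper's one-line argument (``multiply both sides by $x_i$ \dots\ we obtain a linear combination of elements of~$\mathcal{O}$'') contains the identical gap: the products $x_iw$ are not elements of~$\mathcal{O}$.

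The gap is in fact fatal for the algorithm as literally stated (minimal-degree selection with arbitrary tie-breaking). Take $\mathbb{X}=\{(0,0),(1,1),(2,2)\}\subset\kk^2$: if one processes $x$ before $y$ (so $y\notin\mathcal{O}$, since $y\equiv x$ on~$\mathbb{X}$) and then $xy$ before $x^2$ at degree two, the evaluation vector $(0,1,4)$ of $xy$ is not in the span of those of $1$ and~$x$, hence $xy$ enters~$\mathcal{O}$. The output $\mathcal{O}=\{1,x,xy\}$ is only a quasi order ideal, not an order ideal. Thus the order-ideal conclusion genuinely requires an extra hypothesis---for instance tie-breaking by a fixed degree-compatible term ordering---which neither your argument nor the paper's supplies.
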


\begin{proof}
First we show that the algorithm terminates. Reasoning by reductio ad absurdum, 
we assume that the algorithm does not terminate. Thus, by Steps~10 and~12 in the 
algorithm, it follows that $\mathcal{O}$ is infinite, since $L$ is enlarged only when $\mathcal{O}$ is enlarged. We observe that no linear combination of the terms in $\mathcal{O}$ belongs to $\II(\mathbb{X})$ (Lemma \ref{lem2}). This entails that $\mathcal{O}$ can be extended to a basis for $\R/\II(\mathbb{X})$ as a $\kk$-vector space. This contradicts the zero-dimensionality of $\II(\mathbb{X})$, and so the algorithm terminates.

Now we claim that $\mathcal{O}$ is an order ideal. Suppose that $t\in \mathcal{O}$ and $\tilde{t}=t/x_i \notin \mathcal{O}$ for some $i$. Since $\tilde{t} \notin \mathcal{O}$, the normal form of $\tilde{t}$ is a linear combination of the normal forms of the elements in $\mathcal{O}$ computed before $\tilde{t}$. If we multiply both sides of this representation by $x_i$, then we obtain a linear combination of elements of $\mathcal{O}$ for $t$  which is a contradiction to  $t\in \mathcal{O}$. 
  
  We conclude the proof by showing that $G$ is a border basis w.r.t. $\mathcal{O}$. The set $L$ is enlarged only in Step 12. In the set $L$, for each $t\in \mathcal{O}$ and for each $x_i$ we consider $x_it$.  If a linear combination of $x_it$ and the elements in $\mathcal{O}$ belongs to $\II(\mathbb{X})$, then by Lemma \ref{lem2} we have   {\sc NormalForm}$(\sum_{i=1}^{s}t(P_{i})y_{i},M,\prec)=0$ and so we add the polynomial $t- \sum _{i=1}^{m} q_{i}s_{i}$ to $G$ which finally shows the set $G$ has the form a prebasis. On the other hand, in each iteration of Step 10, we add a term $t$ to $\mathcal{O}$ which is a linearly independent from the remainders of the terms in $\mathcal{O}$. Since the set $\mathcal{O}$ has $s$ terms, then the set $\mathcal{O}$ forms a basis for the $\kk$-vector space $\R/\II(\mathbb{X})$ which shows that $G$ is a border basis and the proof is finished.
\end{proof}

\begin{remark}
In this algorithm, the list $L$ is considered to be a set, 
and so repeated terms are removed. Further, due to the degree-compatible  
selection strategy of this algorithm, it does not reconsider a term to study. 
Finally, we note that using this algorithm, one can obtain 
a border basis for an ideal of points so that the border terms of its elements 
do not respect any term ordering. For example, if we consider any term ordering, 
we can not obtain the set $\{1, y, x, y^2, x^2\}$  as the complement of a 
leading term ideal for the vanishing ideal of the set of points  
$\mathbb{X}=\{(1, 1),(-1, 1), (0, 0),(1, 0),(0,-1)\}$.  
However, all previous algorithms rely on a term ordering. 
\end{remark}

In the following example we use algorithm {\sc BM-Border} 
to obtain the border basis mentioned in the introduction of~\cite{kaspar}.

\begin{example}
Let us execute the steps of algorithm {\sc BM-Border} to compute an order ideal 
and a border basis for the ideal of points of the set 
$\mathbb{X}= \{ (0, 0),(0,-1),\allowbreak (1, 0), (1, 1), (-1, 1) \}$ in $\mathbb{Q}^2$. 
We number the iterations of the \textbf{while}-loop in this algorithm consecutively.

\begin{itemize}
\item[] First we set $G:=\{\}; \mathcal{O}=\{\};M:=[\ ];S:=[\ ]; L:=\lbrace 1 \rbrace $.

\item[$(1)$] We select $ t=1$. We have $f=y_{1}+y_{2}+y_{3}+y_{4}+y_{5}$. 
Thus, $  M=[y_{1}+y_{2}+y_{3}+y_{4}+y_{5}], \, \, \mathcal{O}=\{1\}, \, \, 
S=[1],\, \, \text{and} \, \,L=\{x,y\} $.

\item[$(2)$] Choose $ t=x$ and get  $L=\{y\}$. We have $f=y_{3}+y_{4}-y_{5}$. Thus, $  M=[y_{1}+y_{2}+y_{3}+y_{4}+y_{5},y_{3}+y_{4}-y_{5}], \, \,\mathcal{O}=\{1,x\}, \, \, S=[1,x], \, \, \text{and} \, \,L=\{y,x^{2},xy\} $.
     
\item[$(3)$] Choose $ t=y$ and get  $L=\{x^{2},xy\}$. We have $f=-y_{2}+y_{4}+y_{5}$. Thus, $ M=[y_{1}+y_{2}+y_{3}+y_{4}+y_{5},y_{3}+y_{4}-y_{5},-y_{2}+y_{4}+y_{5}], \, \,\mathcal{O}=\{1,x,y \}, \, \, S=[1,x,y], \, \, \text{and} \, \,L=\{x^{2},y^{2},xy\} $.
   
  \item[$(4)$] Choose $ t=x^{2}$ and get $L=\{y^{2},xy\}$. We have $f=2y_{5}$. Thus, $ M=[y_{1}+y_{2}+y_{3}+y_{4}+y_{5},y_{3}+y_{4}-y_{5},-y_{2}+y_{4}+y_{5},2y_{5}], \, \,\mathcal{O}=\{1,x,y,x^{2} \}, \, \, S=[1,x,y,x^{2}-x], \, \, \text{and} \, \,L=\{y^{2},xy,x^3,x^{2}y\} $. 
   
\item[$(5)$] Choose $ t=x^{3}$ and get $L=\{y^{2},xy,x^{2}y\}$. Since $f=0$ and $ g_{1}=x^{3}-x$ we have $ G=\{x^{3}-x\}$.

 \item[$(6)$] Choose $ t=y^{2}$ and get $L=\{xy,x^{2}y\}$. We have $f=2y_{4}$. Thus, $ M=[y_{1}+y_{2}+y_{3}+y_{4}+y_{5},y_{3}+y_{4}-y_{5},-y_{2}+y_{4}+y_{5},2y_{5},2y_{4}], \, \,\mathcal{O}=\{1,x,y,x^{2},y^{2} \}, \, \,S=[1,x,y,x^{2}-x,-x^2+y^{2}+x+y], \, \,  \text{and} \, \,L=\{y^{3},xy,x^{2}y,xy^2\} $.

\item[$(7)$] Choose $ t=y^{3}$ and get $L=\{xy,x^{2}y,xy^2\}$. Since $f=0$, we compute $ g_{2}=y^{3}-y$. Now we have $ G=\{x^{3}-x,y^{3}-y\}$.

\item[$(8)$] Choose $ t=xy$ and get $L=\{x^{2}y,xy^2\}$. Since $f=0$, we compute $ g_{3}=xy-x+x^2-1/2y-1/2y^2$. Now we have $ G=\{x^{3}-x,y^{3}-y,xy-x+x^2-1/2y-1/2y^2\}$.
       
 \item[$(9)$] Choose $ t=x^2y$ and get $L=\{xy^2\}$. Since $f=0$, we compute $ g_{4}=x^2y-1/2y-1/2y^2$. Now we have $ G=\{x^{3}-x,y^{3}-y,xy-x+x^2-1/2y-1/2y^2,x^2y-1/2y-1/2y^2\}$.      
       
\item[$(10)$] Finally we select $ t=xy^2$ and compute the polynomial $ g_{5}=y^2x-x+x^2-1/2y-1/2y^2  $. Since $L=\{\}$, we obtain $\mathcal{O}=\{1,x,y,x^2,y^2\} $ and $G=\{x^{3}-x,y^{3}-y,xy-x+x^2-1/2y-1/2y^2,x^2y-1/2y-1/2y^2,y^2x-x+x^2-1/2y-1/2y^2\}$.   
 
\end{itemize}
\end{example}

Based on the above algorithm, we propose a new recursive algorithm 
to compute all order ideals for which a given ideal of points supports a border basis.

\begin{algorithm}[ht]
\caption{{\sc BM-AllOrderIdeals}}
\label{Allordermain}
\begin{algorithmic}[1]
\STATE {\bf{Input:}} $ \mathbb{X}= \lbrace P_{1},\ldots,P_{s} \rbrace \subset \kk^{n}$

\STATE {\bf{Output:}} A list of all order ideals $\mathcal{O}$ such that
$I(\mathbb{X}) $ has an $\mathcal{O}$-border basis

\STATE $ L:=\emptyset, \mathcal{O}:=\emptyset$

\STATE Let $  M:=\text{Mat}_{0,s}(\kk) $ be a matrix with $s$ columns and zero rows

\STATE{\sc AllOIStep}$(\mathbb{X},L,\mathcal{O},M)$

 \STATE{\bf{return}}  $(L)$
\end{algorithmic}
\end{algorithm}

Here subalgorithm {\sc AllOIStep}$(\cdots)$ is given by Algorithm~4. 
Notice that, for a term $t$, we let $\text{eval}_{\mathbb{X}}(t)$ 
be the evaluation vector $\text{eval}_{\mathbb{X}}(t)=(t(P_1),\ldots,t(P_s))$
with respect to the given set of points $\mathbb{X}= \{P_1, \dots, P_s \} 
\subset \kk^{n}$.

\begin{algorithm}[ht]
\caption{{\sc  AllOIStep }}
\label{Allorder}
\begin{algorithmic}[1]

\STATE {\bf{Input:}} $ \mathbb{X}= \{ P_1, \dots, P_s \} \subset \kk^{n}$, 
    a list of sets $L$, an order ideal $\mathcal{O}$, and a matrix $M$

\STATE {\bf{Output:}} An updated tuple $(\mathbb{X},L,\mathcal{O},M)$

\IF {$|\mathcal{O}|= s $}
  \STATE Append the order ideal $\mathcal{O}$ to $L$
\ENDIF

\IF {$|\mathcal{O}|< s $}
  \STATE Let $S$ be the set of all terms $t \not \in \mathcal{O}$ 
         s.t $ \mathcal{O} \cup \{t\}$ is an order ideal 
  \FOR {$t$ \textbf{in} $S$}
    \STATE Compute the reduction $(v_1,\ldots,v_s)$ of $(t(P_1),\ldots,t(P_s))$ 
           with respect to $M$ and write
   \begin{center}
   $(v_1,\ldots,v_s)=(t(P_1),\ldots,t(P_s))-\sum_{k}c_k(m_{k1},\ldots,m_{ks})$ 
   \end{center}
   where $c_k \in \kk$ and $(m_{k1},\ldots,m_{ks})$ are the rows of $M$
   \IF {$(v_1,\ldots,v_s)\neq (0,\ldots,0)$}
   \STATE Let $M_{new}$ be the matrix obtained by appending $(v_1,\ldots,v_s)$ 
          as a new row to $M$
   \STATE $\mathcal{O}:=\mathcal{O} \cup \{t\}$
   \STATE {\sc AllOIStep}$(\mathbb{X},L,\mathcal{O},M_{new})$
   \ENDIF
   \ENDFOR
\ENDIF

\end{algorithmic}
\end{algorithm}

In Section~5 we analyze the performance of this algorithm. 
Before proving its correctness, let us apply it to the set 
$\mathbb{X}=\{(0, 0), (0, -1),(1, 0),(1, 1), (-1, 1)\}$ and explain the main idea. 
It should be noted that a more detailed application of the algorithm is given in 
Example~\ref{ex}. As we can see in Figure~2, we select successively the terms 
$1,x,y,x^2$ and add them to $\mathcal{O}$. Then the first border of $\mathcal{O}$ 
is $\{x^3,x^2y,y^2,xy\}$. We need to study each of these terms, except $x^2y$ 
because $\{1,x,y,x^2,x^2y\}$ does not form an order ideal. If we choose $x^3$ 
then we find a linear dependency and the corresponding branch is broken. 
However, if we choose $y^2$, its evaluation vector is linearly independent 
from the rows of $M$ and therefore we add $y^2$ to  $\mathcal{O}$. 
Since the cardinality of the resulting set is~5, we found an 
order ideal $\mathcal{O}$ as desired.

\begin{center}
\begin{figure}
\begin{tikzpicture}[sibling distance=16mm,rounded corners]
%[edge from parent/.style={draw,-latex}][sibling distance=30mm,align=center!20]
  \node [rectangle,draw] (0) {1}
  child {node [rectangle,draw] (1) {$x$}
    child {node [rectangle,draw] (3) {$y$}
      child {node [rectangle,draw] (q7) {$x^{2}$} 
       child {node [rectangle,draw] (q2) {$x^{3}$ } 
        child[grow=left] {node [rectangle,draw] (2) {Break} } }
         child {node [rectangle,draw] (q2) {$y^{2}$} 
        child [sibling distance=5cm]  {node [rectangle,draw]  (2)
        [text width=4cm,text height=-0.3cm, sibling distance=10cm] 
        {$$\mathcal{O}=\lbrace1,x,y,x^{2},y^{2}\rbrace$$} 
        child {node [rectangle,draw]  [text width=12cm,text height=0.3cm,] 
         {{\footnotesize $ G=\lbrace x^2y-1/2y^2-1/2y, x^3-x, y^3-y,$
          $y^2x+x^2-x-1/2y^2-1/2y, xy+x^2-x-1/2y^2-1/2y \rbrace $}} } }}
          child {node [rectangle,draw] (e) {$xy$} child[grow=down] {node [below right=of e] (q10) {} edge from parent[dashed]} } }
      child {node [rectangle,draw] (a) {$y^{2}$} 
      child {node[below right=of a]  (q10) {} edge from parent[dashed]}}
        child {node [rectangle,draw] (b) {$xy$} child[grow=right] {node[below right=of b] (q10) {} edge from parent[dashed]}}}
    child {node [rectangle,draw] (c) {$x^{2}$} child[grow=right]  {node[below right=of c] (q10) {} edge from parent[dashed]} }}  
  child {node [rectangle,draw] (d) {$y$} 
   child[grow=right] {node[below right=of d] (q10) {} edge from parent[dashed]}};
\end{tikzpicture}
\caption{Example for the recursive structure of Algorithm~3}
\end{figure}
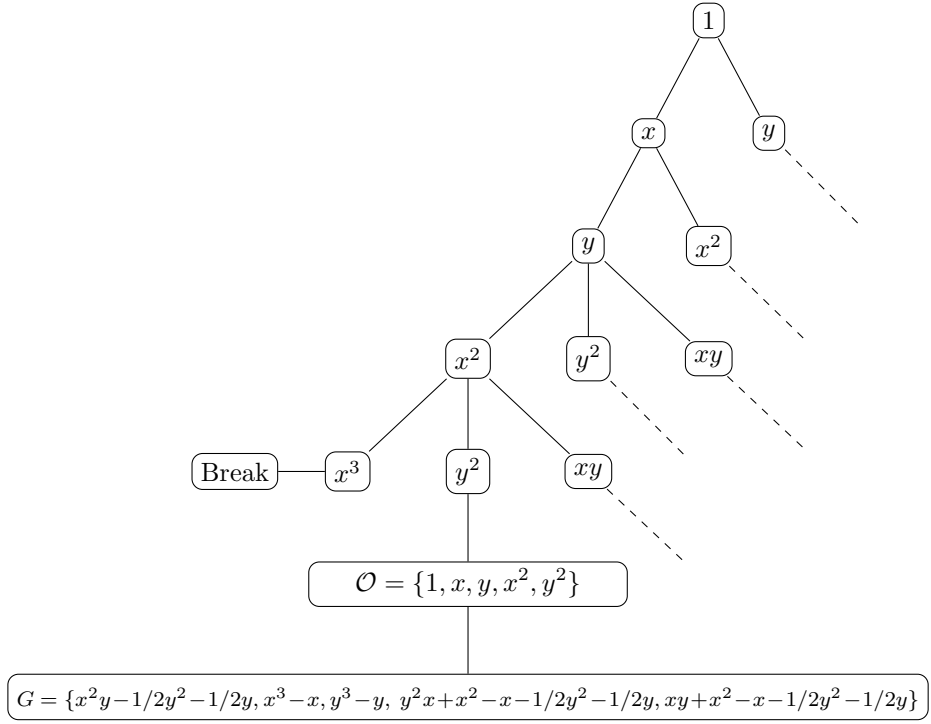
\end{center}

\begin{theorem}
Algorithm {\sc BM-AllOrderIdeals} terminates and computes all order ideals $\mathcal{O}$ such that the vanishing ideal of the given set of points has an $\mathcal{O}$-border basis.
\end{theorem}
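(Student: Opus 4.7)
My plan is to prove termination, soundness (everything produced is a correct border pair's order ideal), and completeness (every such order ideal is produced) in that order.

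For termination, I would observe that each recursive call to {\sc AllOIStep} either returns immediately (when $|\mathcal{O}|=s$) or invokes further recursion only after strictly enlarging $\mathcal{O}$ by one term. Since $\mathcal{O}$ is bounded in size by $s$ (any subset added to~$L$ has cardinality~$s$, and linear independence of the evaluation vectors forces $|\mathcal{O}|\le s$ at every intermediate step by the isomorphism $\R/\II(\mathbb{X})\cong\kk^s$), the recursion depth is at most~$s$. At each node the candidate set~$S$ of next terms is finite, since its elements are terms $t\notin\mathcal{O}$ all of whose proper divisors lie in~$\mathcal{O}$, i.e., a subset of~$\partial\mathcal{O}$. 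Hence the recursion tree is finite.

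For soundness, suppose the algorithm appends $\mathcal{O}$ to~$L$. Then $|\mathcal{O}|=s$ by Step~3, and by construction $\mathcal{O}$ was built up by repeatedly adjoining a term~$t$ whose evaluation vector is linearly independent from the current rows of~$M$ (Step~10). Thus the $s$ evaluation vectors $\mathrm{eval}_{\mathbb{X}}(t_i)$, $t_i\in\mathcal{O}$, are linearly independent in~$\kk^s$, so they form a basis of~$\kk^s$. Since the evaluation map induces the isomorphism $\R/\II(\mathbb{X})\cong\kk^s$, the residue classes of the terms in~$\mathcal{O}$ form a $\kk$-basis of~$\R/\II(\mathbb{X})$. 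Moreover $\mathcal{O}$ is an order ideal because Step~7 only permits extensions that preserve this property, and the empty set is an order ideal. By Definition~\ref{def:prebasis}(3), $\II(\mathbb{X})$ then supports an $\mathcal{O}$-border basis.

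For completeness, let $\mathcal{O}^{*}$ be any order ideal for which $\II(\mathbb{X})$ has an $\mathcal{O}^{*}$-border basis; then $|\mathcal{O}^{*}|=s$ and the evaluation vectors $\{\mathrm{eval}_{\mathbb{X}}(t):t\in\mathcal{O}^{*}\}$ form a basis of~$\kk^s$. I will show by induction on $k=|\mathcal{O}'|$ that the algorithm reaches a recursive call with $\mathcal{O}'\subseteq\mathcal{O}^{*}$ of size~$k$ for every $k\le s$; taking $k=s$ finishes the argument. The base case $k=0$ is the initial call in Step~5 of {\sc BM-AllOrderIdeals}. For the induction step, assuming the algorithm reaches $\mathcal{O}'\subsetneq\mathcal{O}^{*}$, choose a term $t\in\mathcal{O}^{*}\setminus\mathcal{O}'$ that is minimal with respect to divisibility. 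Every proper divisor of~$t$ lies in~$\mathcal{O}^{*}$ (since $\mathcal{O}^{*}$ is an order ideal) and, by minimality of~$t$, in~$\mathcal{O}'$, so $\mathcal{O}'\cup\{t\}$ is an order ideal and hence $t\in S$. Because $\mathcal{O}'\cup\{t\}\subseteq\mathcal{O}^{*}$ and the evaluation vectors of the terms of~$\mathcal{O}^{*}$ are linearly independent, the evaluation vector of~$t$ cannot lie in the span of those of~$\mathcal{O}'$; equivalently the reduction $(v_{1},\dots,v_{s})$ computed in Step~9 is nonzero, so the algorithm enters Step~11 and recurses on $\mathcal{O}'\cup\{t\}$, as required.

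The key technical point, and the only non-trivial obstacle, is the completeness argument: one must realise that the recursive strategy of adding one admissible term at a time will never be forced to prune a branch leading to a valid target~$\mathcal{O}^{*}$. The combinatorial observation that a divisibility-minimal element of $\mathcal{O}^{*}\setminus\mathcal{O}'$ is always safe to adjoin (it preserves the order-ideal property), combined with the linear-algebraic observation that the linear independence of the evaluation vectors of any subset of a border-basis order ideal is automatic, is precisely what makes this work.
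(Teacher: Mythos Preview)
Your proof is correct and follows essentially the same three-part structure (termination, soundness, completeness) as the paper's proof. The only notable difference is in the completeness argument: the paper stratifies the target order ideal $\mathcal{O}^{*}$ by degree and runs a double induction (outer on degree level, inner within a level), whereas you run a single induction on $|\mathcal{O}'|$ and pick a divisibility-minimal element of $\mathcal{O}^{*}\setminus\mathcal{O}'$ at each step; the two arguments rest on the same two observations (such a term keeps the order-ideal property, and its evaluation vector is automatically independent), and your organization is arguably cleaner.
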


\begin{proof}
First we discuss the termination of the algorithm. By Step~7 of 
Algorithm~4, at each recursion we consider a new term~$t$ 
and a set $\mathcal{O}$  which is an order ideal. 
In Step 7, Algorithm~4 considers $\mathcal{O}=\mathcal{O}\cup \{t\}$ 
as the new order ideal and creates new branches (Step 8 in the {\bf for}-loop) 
for each term~$t$ in the border of~$\mathcal{O}$ so that $\mathcal{O}\cup \{t\}$ 
forms an order ideal. Hence, for~$t$ we have a finite number of choices. 
So, the order ideals considered in the next level of the recursion will have 
one more element and eventually we reach the case $|\mathcal{O}|= s$ in which 
the branch of the recursion stops. Therefore, the termination of the algorithm 
follows from the facts that each branch has length at most $s$ and each node 
has a finite number of choices.

Now we show correctness. We prove that if $\mathcal{O} = \{ t_1, \dots, t_s \}$ 
is an element in~$L$, then it is an order ideal for $\II(\mathbb{X})$. 
By Step~7 of Algorithm~4, we see that $\mathcal{O}$ is an order ideal. 
It remains to prove that $\II(\mathbb{X})$ has an $\mathcal{O}$-border basis. 
Since the set $\mathcal{O}$ has $s$ terms,  it has the correct cardinality 
for $\II(\mathbb{X})$ to support an $\mathcal{O}$-border basis.  
In the Step 10 of Algorithm~4, if the evaluation vector 
$(t(P_1),\ldots,t(P_s))$ of an element $t\in \mathcal{O}$ is linearly 
independent of the rows of $M$, we add it to the intermediate matrix~$M$. 
Therefore the final matrix $M$ is a square matrix whose rows correspond 
to the evaluation vectors $(t_{i}(P_1),\ldots,t_{i}(P_s))$ for each $i=1,\ldots ,s$.
    
Since $M$ is invertible, the residue classes of the terms in $\mathcal{O}$ form a 
basis for the $\kk$-vector space $\R/\II(\mathbb{X})$ by~\cite[Sec. 6]{book}. 
By the definition of border bases, $\II(\mathbb{X})$ has an $\mathcal{O}$-border 
basis which proves the claim.
  
Finally, we show that we can find any order ideal $\mathcal{O}$ of~$\II(\mathbb{X})$ 
in~$L$. For this, suppose that  $ \mathcal{O}=\lbrace t_{1},\ldots,t_{s} \rbrace $ 
is an order ideal of $\II(\mathbb{X})$  and that it is ordered increasingly according 
to the degree of its elements. Let $d=\max\{\deg(m)\ | \ m\in \mathcal{O}\}$. 
For each $i=0,\ldots ,d$, let $\mathcal{O}_i$ be the set of all terms in $\mathcal{O}$ of degree at most $i$. We prove, using induction on $i$, that every $\mathcal{O}_i$ is constructed during the algorithm. It is clear that $\mathcal{O}_0=\{1\}$ is considered by the algorithm. Since $ \mathcal{O}$ is closed under forming divisors, the set $\mathcal{O}_i$ is an order ideal as well. Now suppose that $\mathcal{O}_i=\{t_1,\ldots ,t_j\}$ has been already constructed and $t_{j+1},\ldots ,t_\ell$ is the sequence of all terms in $\mathcal{O}$ of degree $i+1$. Our goal is to prove that $A=\mathcal{O}_i\cup \{t_{j+1},\ldots ,t_{l}\}$ is used as input for {\sc AllOIStep}($\cdots$) at some point during the recursion. We proceed by induction on $k$ and show that $A_{k}=\mathcal{O}_{i} \cup \{t_{j+1},\ldots, t_{j+k}\}$ will be chosen. For the case $k=0$, we have $A_{0}=\mathcal{O}_{i}$. Since $t_{j+1} \in \partial \mathcal{O}_i \cap \mathcal{O}$, the tuple $\text{eval}_{\mathbb{X}}(t_{j+1})$ is $\kk$-linearly independent of the previous rows of $M$. So we can add it to $\mathcal{O}_{i}$, and therefore $A_1=\mathcal{O}_i \cup \{t_{j+1}\}$ will be constructed. Now suppose that $A_{k-1}$ has been constructed. We can repeat the same argument as in the case $k=1$. Namely, $t_{j+k}$ is in the border of $\mathcal{O}_i \cup \{t_{j+1},\ldots,t_{j+k-1} \}$ and $\text{eval}_{\mathbb{X}}(t_{j+k})$ is linearly independent of the rows of $M$ because of $t_{j+k} \in \mathcal{O}$. Hence $A_k=\mathcal{O}_i \cup \{t_{j+1},\ldots,t_{j+k} \}$ and $\mathcal{O}_{i+1}$ will be constructed by the algorithm. Finally, when we reach $i=d$, we have $\mathcal{O}_{i}=\mathcal{O}$ and $\mathcal{O}$ is appended to $L$ in Step~4 of {\sc AllOIStep}($\cdots$).
\end{proof}

The next example illustrates this procedure.

\begin{example}\label{ex}
Let us compute all order ideals of the ideal of points of the set 
$\mathbb{X} =  \{ (2,3), (1,4),(5,0) \}$ in $\mathbb{Q}^2$. 
In what follows we write down the steps of the above algorithm. 

\begin{enumerate}
\item[(3)] Let $ L=\{ \},\mathcal{O}=\{ \}$
 
\item[(4)] Let $ M=
 \begin{pmatrix}
   &  & 
 \end{pmatrix} $ in $\text{Mat}_{0,3}(\kk)$

\item[(5)] Call {\sc AllOIStep}($\cdots$)

\item[{[7]}] Let $S=\{ 1\}$

\item[{[9]}] Choose $ t=1$ and compute $\text{eval}_{\mathbb{X}}(t)=(1,1,1)=(v_1,v_2,v_3) $

\item[{[11]}] $M_{new}=\begin{pmatrix}
  1 & 1 & 1
 \end{pmatrix}$
 
\item[{[12]}] Let $\mathcal{O}=\{1\}$
 
\item[{[13]}] Call {\sc AllOIStep}$(\cdots)$
 
\item[{[7]}] $S=\{x,y\}$

\item[{[9]}] Choose $ t=x$ and let $\text{eval}_{\mathbb{X}}(t)=(2,1,5)$ Compute $(v_1,v_2,v_3)=(2,1,5)-2(1,1,1)=(0,-1,3)$

\item[{[11]}] $M_{new}=\begin{pmatrix}
  1 & 1 & 1\\
  0 &-1 & 3
 \end{pmatrix}$
 
\item[{[12]}] Let $ \mathcal{O}=\{1,x \}$

\item[{[13]}] Call {\sc AllOIStep}$(\cdots)$

\item[{[7]}] $ S=\{y,x^2\} $

\item[{[9]}] Choose $ t=y$ and let $\text{eval}_{\mathbb{X}}(t)=(3,4,0)$. Compute $(v_1,v_2,v_3)=(0,0,0) $
  
\item[{[9]}] Choose $ t=x^2$ and let $\text{eval}_{\mathbb{X}}(t)=(4,1,25)$.  Compute $(v_1,v_2,v_3)=(0,0,12) $
  
\item[{[11]}]  $M_{new}=\begin{pmatrix}
  1 & 1 & 1\\
  0 &-1 & 3\\
  0& 0 & 12
 \end{pmatrix} $ 

\item[{[12]}] Let $\mathcal{O}=\{1,x,x^2\}$

\item[{[4]}] Since $|\mathcal{O}|=3$, we set $L=[ \{1,x,x^2 \} ]$
 
\item[{[9]}] Choose $ t=y$ and let $\text{eval}_{\mathbb{X}}(t)=(3,4,0)$. Compute $(v_1,v_2,v_3)=(0,1,-3) $

\item[{[11]}] $M_{new}=\begin{pmatrix}
  1 & 1 & 1\\
  0 &1 & -3
 \end{pmatrix}$
 
\item[{[12]}] Let $ \mathcal{O}=\{1,y \}$

\item[{[13]}] Call {\sc AllOIStep}$(\cdots)$

\item[{[7]}] $ S=\{x,y^2\} $

\item[{[9]}] Choose $ t=x$ and let $\text{eval}_{\mathbb{X}}(t)=(2,1,5)$.  
Compute $(v_1,v_2,v_3)=(0,0,0) $
  
\item[{[9]}] Choose $ t=y^2$ and let $\text{eval}_{\mathbb{X}}(t)=(9,16,0)$.  
Compute $(v_1,v_2,v_3)=(0,0,12) $
  
\item[{[11]}]  $M_{new}=\begin{pmatrix}
  1 & 1 & 1\\
  0 &-1 & 3\\
  0& 0 & 12
 \end{pmatrix} $ 

\item[{[12]}] Let $\mathcal{O}=\{1,y,y^2\}$

\item[{[4]}] Since $|\mathcal{O}|=3$, we add $\{1,y,y^2\}$ to $L$

\item[(6)] $L=[ \{1,x,x^2 \},\{1,y,y^2\} ]$
\end{enumerate}
  
Thus $\II(\mathbb{X})$ has border bases with respect to the two order ideals $\{1,x,x^{2}\} $ and $\{1,y,y^{2}\}$. 
\end{example}
One drawback of this algorithm is that it may produce the same order ideal 
several times, as one can see in the following figure. 
However, keep in mind that our aim is to calculate all order ideals of the 
vanishing ideal of the given set of points. Thus we are willing to pay the cost 
of computing repeated results and remove them later.

\begin{figure}[H]
\begin{center}
\begin{tikzpicture}[sibling distance=25mm]

%[edge from parent/.style={draw,-latex}][siblinل distance=30mm,align=center!20]
  \node  (d) {1}
  child {node  {$x$}
    child {node  {$x^2$}
      child {node  {$y$} child {node (q10) {} edge from parent[dashed]} } }
    child {node  (c) {$y$} 
       child {node (a)  {$x^{2}$}  child {node (q10) {} edge from parent[dashed]} 
    } child {node  {} edge from parent[dashed]}}}  
   child {node[below right=of d] (q10) {} edge from parent[dashed]};
\end{tikzpicture}
\end{center}
\end{figure}

Finally, we draw the attention of the reader to the example 
$\mathbb{X}= \{ (0,0,0,1),\allowbreak (1,0,0,2), (3,0,0,2),(5,0,0,3),(-1,0,0,4),(4,4,4,5),(0,0,7,6) \}$ from~\cite{DBLP:journals/corr/BraunPX14}. Algorithm~4 computes $55$  different order ideals for $\II(\mathbb{X})$. However, using the algorithm of Braun 
and Pokutta (cf. \cite{DBLP:journals/corr/BraunPX14}), one can find only~45 order ideals.

%%%%%%%%%%%%%%%%%%%%%%%%%%%%%%%%%%%%%%%%%%%%%%%
%
%  Section 4: Computing All Quasi Border Pairs
%
%%%%%%%%%%%%%%%%%%%%%%%%%%%%%%%%%%%%%%%%%%%%%%%

\section{Computing All Quasi Border Pairs}

Farr and Gao in \cite{Gao}  described an alternate method, which is a generalization of Newton's interpolation for univariate polynomials, to compute the reduced Gr\"obner basis for an ideal of points. Based on this incremental algorithm, we describe a new algorithm  to calculate the set of all quasi border pairs associated to an ideal of points in this section. Furthermore, we show a detailed example of the execution of this algorithm. 

In \cite[Sec. 4]{Gao}, the authors mentioned that their algorithm may be applied to compute a border basis for an ideal of points. Below we present this algorithm in full detail and a slight improvement.  We point out that in our presentation of this algorithm, 
we use the border term of a polynomial instead of using its leading term. 
Indeed, in view of the structure of the algorithm, we can associate inductively a 
(degree-compatible) border term to each constructed polynomial 
(like the term marking strategy defined in~\cite{kaspar}). In the next algorithm, all newly constructed polynomials are monic, that is the coefficient of the border term of each polynomial is~1. 
First we describe algorithm {\sc BorderTermDivision} to compute the remainder of the 
division of certain polynomials by a given quasi border prebasis.
 
\begin{algorithm}[H]
\caption{{\sc BorderTermDivision}}
\begin{algorithmic}[1]
\STATE {\bf{Input:}} A polynomial $ f $ such that  
    $\text{Supp}(f)\subseteq \widehat{\mathcal{O}}$ and a quasi border 
    prebasis $G=\{g_1,\ldots ,g_\nu\}$

\STATE {\bf{Output:}} A polynomial $\tilde{f}$ in $f+\langle g_1,\ldots, 
     g_\nu \rangle_{\kk}$ such that $\text{Supp}(\tilde{f}) \subseteq 
     \langle \widehat{\mathcal{O}} \rangle_{\kk }$
 
 \STATE Write $f=\sum_{i=1}^{\nu} c_{i}b_{i}+\sum_{j=1}^{\mu} \tilde{c}_{j}t_{j}$ with $c_i,\tilde{c}_{j} \in \kk$ 
\STATE $\tilde{f}:=f-\sum_{i=1}^{\nu} c_{i}g_{i} \in \langle \widehat{\mathcal{O}} \rangle _{\kk}$
\STATE {\bf{return}}$(\tilde{f})$
\end{algorithmic}
\end{algorithm}

Now we are ready for the border basis version of the Farr-Gao Algorithm.

\begin{algorithm}[H]
\caption{{\sc FG-Border}}
\begin{algorithmic}[1]
 \STATE {\bf{Input:}} $ \mathbb{X}= \lbrace P_{1}, \ldots,P_{s} \rbrace \subset \kk^{n}$ where $P_{i}=(p_{i1},\ldots,p_{in})\in \kk^n$
 \STATE {\bf{Output:}} A border pair for $\II(\mathbb{X})$
 \STATE $G:=\{1\}$
% \STATE Order the variables so that $ x_{1} < x_{2} < \cdots < x_{m}$;
\FOR { $  k$ \textbf{from} $  1$ \textbf{to} $s$ }
 \STATE Find a smallest degree polynomial $g_i\in G=\{g_1,\ldots ,g_m\}$ with $ g_{i}(P_{k})\neq 0 $
 \FOR { $  j$ \textbf{from} $ 1 $ \textbf{to} $m $ }
 \IF{$g_j(P_k)\ne 0$}
 \STATE $ g_{j}:= g_{j}-g_{j}(P_{k})/g_{i}(P_{k})\cdot g_{i} $
 \ENDIF
 \ENDFOR
 \STATE $\mathcal{O}:=\mathcal{O} \cup \{\BT_{\mathcal{O}}(g_i)\}$
 
 \STATE $A:=\{\ \}$
 \FOR { $j$ \textbf{from} 1 \textbf{to} $n$}
  \IF {$ x_{j}\cdot \BT_{\mathcal{O}}(g_{i}) \notin \BT_{\mathcal{O}}(G \setminus \lbrace g_{i}\rbrace)$}
 \STATE $ h:=${\sc BorderTermDivision}$((x_{j}-p_{kj})\cdot g_{i}, G)$ 
 \STATE $A:=A \cup \{h\}$
 \ENDIF
 \ENDFOR
 \STATE $ G:=G \setminus \lbrace g_{i}\rbrace $
\STATE  $G:=G\cup A$
\ENDFOR
\STATE {\bf{return}}$( \mathcal{O},G)$
\end{algorithmic}
\end{algorithm}

\begin{lemma}\label{le1}
Let $\mathcal{O} \subseteq \T$ be an order ideal. Let $b \in \partial\mathcal{O}$ 
be an element of smallest degree in~$\partial\mathcal{O}$. 
Then the set $\mathcal{O} \cup \{b\}$ is an order ideal. 
\end{lemma}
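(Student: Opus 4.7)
The plan is to unfold the definition directly: $\mathcal{O} \cup \{b\}$ is an order ideal iff it is closed under divisors. Since $\mathcal{O}$ is already closed under divisors, the only thing to verify is that every proper divisor of $b$ lies in $\mathcal{O}$. So I would fix an arbitrary term $t$ with $t \mid b$ and $t \neq b$ and aim to prove $t \in \mathcal{O}$.

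By definition of the border, I can write $b = x_i \cdot t'$ with $t' \in \mathcal{O}$ and $b \notin \mathcal{O}$. Then I would split into two cases based on whether $t \mid t'$. If $t \mid t'$, then $t \in \mathcal{O}$ follows immediately from the order-ideal property of $\mathcal{O}$. If $t \nmid t'$, a short exponent calculation shows that the only way this can happen (given $t \mid x_i t'$) is that the $x_i$-exponent of $t$ equals the $x_i$-exponent of $b$, so in particular $x_i \mid t$; writing $t = x_i \tilde{t}$, the remaining exponents force $\tilde{t} \mid t'$, hence $\tilde{t} \in \mathcal{O}$. Thus $t \in x_i \mathcal{O} \subseteq \mathcal{O} \cup \partial\mathcal{O}$.

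The crux of the argument is ruling out the possibility $t \in \partial\mathcal{O}$ in the second case: this is exactly where the minimal-degree hypothesis on $b$ is used. Since $t$ is a proper divisor of $b$, we have $\deg(t) < \deg(b)$; if $t$ were in $\partial\mathcal{O}$, this would contradict the choice of $b$ as an element of smallest degree in $\partial\mathcal{O}$. Therefore $t \in \mathcal{O}$, as required.

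I expect no real obstacle here; the only thing to be careful about is the trivial corner case in which $\mathcal{O} = \emptyset$, so that $\partial\mathcal{O} = \{1\}$ and $b = 1$. In that case $b$ has no proper divisors, and $\{1\}$ is trivially an order ideal, so the claim is vacuous. The main step that deserves being spelled out cleanly is the exponent-level bookkeeping that shows $t \nmid t' \Rightarrow x_i \mid t$ and $\tilde{t} \mid t'$ in the second case.
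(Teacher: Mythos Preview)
Your argument is correct and follows the same core idea as the paper's proof: show that the relevant proper divisors of $b$ land in $\mathcal{O}\cup\partial\mathcal{O}$, then invoke the minimal-degree hypothesis on $b$ to exclude $\partial\mathcal{O}$. The paper streamlines the bookkeeping by only checking the maximal proper divisors $b/x_i$ (for each $x_i\mid b$) rather than an arbitrary proper divisor $t$, which avoids your exponent-level case split; once each $b/x_i$ is shown to lie in $\mathcal{O}$, the order-ideal property of $\mathcal{O}$ immediately takes care of all smaller divisors.
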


\begin{proof}
For each $x_i$ dividing $b$, we have to consider two cases: either $b/x_i \in \partial\mathcal{O}$ or $b/x_i \in \mathcal{O}$. In the fist case, since the term $b$ has the smallest degree in $\partial\mathcal{O}$, this yields a contradiction.  In the latter case, since $\mathcal{O}$ is an order ideal, the set $\mathcal{O} \cup \{b\}$ is closed under
forming divisors. 
\end{proof}
 Below we denote by $\langle X \rangle_\kk$ the $\kk$-vector space generated by a set $X$. 

\begin{theorem}
 Algorithm {\sc FG-Border} terminates and outputs a border pair for the vanishing ideal of its input points. 
\end{theorem}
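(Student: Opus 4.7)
Termination is immediate: the outer \textbf{for}-loop runs exactly $s$ times, and each inner loop iterates over a finite list (either the current $G$ or the indices $\{1,\ldots,n\}$). For correctness I would proceed by induction on the iteration index $k=0,1,\ldots,s$, writing $\mathcal{O}_k$ and $G_k$ for the state after iteration $k$ (so $\mathcal{O}_0=\emptyset$ and $G_0=\{1\}$). The invariants to maintain are: (i) $\mathcal{O}_k$ is an order ideal with $|\mathcal{O}_k|=k$; (ii) $G_k$ is in $\mathcal{O}_k$-border prebasis shape with $\BT_{\mathcal{O}_k}(G_k)=\partial\mathcal{O}_k$, each polynomial of degree equal to that of its border term; (iii) every $g\in G_k$ vanishes on $\{P_1,\ldots,P_k\}$; and (iv) the residue classes of the elements of $\mathcal{O}_k$ are $\kk$-linearly independent in $\R/\II(\{P_1,\ldots,P_k\})$. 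The base case is trivial using the convention $\partial\emptyset=\{1\}$.

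For the inductive step, the first thing to check is that some $g\in G_{k-1}$ does not vanish at $P_k$, so that the choice of $g_i$ in line~5 is well-defined. This follows from (iv) by a dimension count: if every polynomial in $G_{k-1}$ vanished at $P_k$, then every polynomial in $\R$ would reduce, modulo the $\kk$-span of $G_{k-1}$ and higher-index corrections, to an element of $\langle\mathcal{O}_{k-1}\rangle_{\kk}$ whose evaluation on the extended point set $\{P_1,\ldots,P_k\}$ is determined by its evaluation on $\{P_1,\ldots,P_{k-1}\}$, giving $\dim_\kk\R/\II(\{P_1,\ldots,P_k\})\le k-1$, a contradiction. The smallest-degree selection rule in line~5, combined with invariant (ii) (that each $g\in G_{k-1}$ has degree equal to its border-term degree), guarantees that $b_i:=\BT_{\mathcal{O}_{k-1}}(g_i)$ is of smallest degree in $\partial\mathcal{O}_{k-1}$, so Lemma~\ref{le1} yields that $\mathcal{O}_k:=\mathcal{O}_{k-1}\cup\{b_i\}$ is again an order ideal, establishing (i).

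The linear reduction step in lines~6--10 sends each $g_j$ (with $j\ne i$) to $g_j-(g_j(P_k)/g_i(P_k))g_i$, preserving both its $\mathcal{O}_{k-1}$-border prebasis shape and its vanishing on $\{P_1,\ldots,P_{k-1}\}$, while forcing $g_j(P_k)=0$. For each coordinate $j$ with $x_jb_i\notin\BT_{\mathcal{O}_{k-1}}(G_{k-1}\setminus\{g_i\})$, the new polynomial $h=${\sc BorderTermDivision}$((x_j-p_{kj})g_i,G)$ has $\BT_{\mathcal{O}_k}(h)=x_jb_i$, is in $\mathcal{O}_k$-border prebasis shape (by the specification of the subalgorithm), and vanishes on $\{P_1,\ldots,P_k\}$ because $(x_j-p_{kj})$ kills $P_k$ while $g_i$ kills $P_1,\ldots,P_{k-1}$. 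A bookkeeping argument shows that the updated set $G_k=(G_{k-1}\setminus\{g_i\})\cup A$ covers precisely $\partial\mathcal{O}_k$ as its set of border terms, each exactly once: the terms $x_jb_i$ that already occur as border terms of some $g_\ell\in G_{k-1}\setminus\{g_i\}$ are now genuine border terms of $\mathcal{O}_k$ and are inherited, while the fresh $h$-polynomials supply the remaining border terms. Invariant (iv) is preserved because the addition of $b_i$ as a new basis element cannot produce a linear dependence: the evaluation vector of $b_i$ at $\{P_1,\ldots,P_k\}$ lies outside the span of the evaluation vectors of $\mathcal{O}_{k-1}$, otherwise $g_i(P_k)$ would vanish.

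After iteration $s$ we have $|\mathcal{O}_s|=s=\dim_\kk\R/\II(\mathbb{X})$, and since the residue classes of $\mathcal{O}_s$ are $\kk$-linearly independent by (iv) they form a basis of $\R/\II(\mathbb{X})$; combined with (ii) and (iii), this makes $G_s$ an $\mathcal{O}_s$-border basis and $(\mathcal{O}_s,G_s)$ the required border pair. The main obstacle I expect is the bookkeeping in the inductive step: tracking border terms consistently across the reduction and multiplication steps, and in particular verifying that the minimal-degree selection rule in line~5 corresponds, via invariant (ii), to a minimal-degree element of $\partial\mathcal{O}_{k-1}$. This is the hypothesis needed to invoke Lemma~\ref{le1}, and maintaining the ``degree equals border-term degree'' half of invariant (ii) through the call to {\sc BorderTermDivision} is the most delicate point of the proof.
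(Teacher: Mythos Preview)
Your overall strategy is the same as the paper's: both argue by induction over the iterations of the outer loop, both invoke Lemma~\ref{le1} to show that adjoining $b_i=\BT_{\mathcal{O}_{k-1}}(g_i)$ keeps $\mathcal{O}$ an order ideal, and both then verify prebasis shape and the basis property of~$\mathcal{O}_k$ (the paper via a surjection and a cardinality count, you via linear independence and a cardinality count). You are in fact more careful than the paper on two points it glosses over: the existence of some $g_i$ with $g_i(P_k)\ne 0$, and the explicit degree invariant~(ii) linking ``smallest-degree polynomial'' to ``smallest-degree border term.''

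There is, however, a genuine gap at precisely the step you flag as delicate, and it is not merely a bookkeeping issue. Invariant~(ii) together with line~5 only shows that $b_i$ has smallest degree among the border terms of those $g\in G_{k-1}$ with $g(P_k)\ne 0$; it does \emph{not} show that $b_i$ has smallest degree in all of $\partial\mathcal{O}_{k-1}$, which is the hypothesis of Lemma~\ref{le1}. Concretely, take three distinct collinear points in~$\kk^2$ on a line that is neither vertical nor horizontal, say $P_1=(0,0)$, $P_2=(1,1)$, $P_3=(2,2)$. After two iterations (choosing the $x$-polynomial first) one has $\mathcal{O}_2=\{1,x\}$ and $G_2=\{y-x,\;x^2-x,\;xy-x\}$, of degrees $1,2,2$. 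Since $P_3$ lies on the line, $(y-x)(P_3)=0$, so the smallest-degree nonvanishing polynomials are $x^2-x$ and $xy-x$, both of degree~$2$. If the tiebreak in line~5 selects $xy-x$, then $\mathcal{O}_3=\{1,x,xy\}$, which is a quasi order ideal but \emph{not} an order ideal, so Lemma~\ref{le1} cannot apply and invariant~(i) fails. The paper's proof invokes Lemma~\ref{le1} at the same point without checking its hypothesis, so the gap is shared; closing it requires a sharper selection rule (e.g.\ break ties by smallest border-term degree, or by a degree-compatible term ordering on border terms) rather than more careful bookkeeping.
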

\begin{proof}
  The termination of the algorithm is ensured by the {\bf for}-loops in the algorithm.  We prove the correctness by induction on $s$. For $s=1$, the algorithm returns $G=\{x_1-p_{11},\ldots ,x_n-p_{1n}\}$ which is a border basis for $\II(\{P_1\})$. Now, suppose that $ \lbrace g_{1},\ldots,g_{m}\rbrace $ is a border basis for $\II(\{P_{1},\ldots,P_{s}\})$ and $\mathcal{O}$ is the corresponding order ideal. We show that the {\bf for}-loop computes for $k=s+1$ a border basis for $\II(\{P_{1},\ldots,P_{s+1}\})$. Let $ P_{k+1}=(p_{k+11},\ldots,p_{k+1n})$ and let $g_i$ be a smallest degree polynomial in $G$ with $ g_{i}(P_{k+1}) \neq 0 $. For each $j$ with $g_j(P_k)\ne 0$, we let $g'_j=g_{j}-g_{j}(P_{k+1})/g_{i}(P_{k+1})\cdot g_{i} $ and we collect all these polynomials in a new set $G'$. Furthermore, we set $\mathcal{O}'=\mathcal{O}\cup \{\BT_{\mathcal{O}}(g_i)\}$. By the choice of $g_i$ and by the inductive hypothesis, Lemma \ref{le1} shows that $\mathcal{O}'$ is an order ideal for $\II(\{P_{1},\ldots,P_{s+1}\})$.
By the choice of $i$, we have $ \BT_{\mathcal{O}}(g'_{j})=\BT_{\mathcal{O}}(g_{j})$ for all $j\neq i$. Also, $g_i$ was replaced with $g_{ij}=(x_{j}-a_{j}) g_{i}$ for $j=1,\ldots ,n$ such that $\BT_{\mathcal{O}}(g_{ij})=x_{j}\BT_{\mathcal{O}}(g_i)$. Thus every element of $G'$ is contained in $\langle \mathcal{O}'\rangle_{\kk} \cup \partial\mathcal{O}'$ which shows that $G'$ is an $\mathcal{O}'$-border prebasis. Since the set $\mathcal{O}'$ generates the $\kk$-vector space $\R/\langle G'\rangle$, and since there is a surjective ring homomorphism $\psi : \R/\langle G'\rangle \longrightarrow \R/\II(\{P_{1},\ldots,P_{s+1}\})$, the set $\mathcal{O}'$ generates the $\kk$-vector space $\R/\II(\{P_{1},\ldots,P_{s+1}\})$. Now the fact that $\mathcal{O}'$ has $s+1$ elements implies that $\mathcal{O}'$ is a basis for the $\kk$-vector space $\R/\II(\{P_{1},\ldots,P_{s+1}\})$ and  $G'$ is the border basis for $\II(\{P_{1},\ldots,P_{s+1}\})$ corresponding to $\mathcal{O}'$.
\end{proof}

The behavior of this algorithm is illustrated by the next example.

\begin{example}
Let us compute a border basis for the vanishing ideal of the set of points 
$\{(1,-1), (3,0),(4,1)\}$ in $\mathbb{Q}^2$. Following the above algorithm, 
suppose that $G=\lbrace g_1,g_2, g_3\rbrace$, where $ g_1=x-2y-3$, $g_2=y^{2}+y$, 
and $g_3=xy-y$, is the border basis constructed for $\II(\{(1,-1), (3,0)\}) $. 

\begin{itemize}
\item[$(1)$] Since $ g_{1}(4,1)\neq 0 $, we update $g_2$ by setting 
$g_2=g_2-g_2(4,1)/g_1(4,1)g_1=y^{2}-3y+2x-6$. By repeating the same process 
with $g_3$ and removing $g_1$ from $G$, we get $G=\{y^{2}-3y+2x-6,xy+3x-7y-9\}$.

\item[$(2)$] Let $ h=(x-4)( x-2y-3)=x^2-7x-2xy+8y+12 $. We apply 
the {\sc BorderTermDivision} algorithm to~$h$ 
and obtain $ A= \lbrace x^2-x-6y-6 \rbrace $.

\item[$(3)$] Now let $ h=(y-1)( x-2y-3)=xy-x-2y^2-y+3 $. 
Since $ xy\in \BT_{\mathcal{O}}(G)$ we do not add~$h$ to~$A$.    

\item[$(4)$] Finally, the set $ G=\{y^{2}-3y+2x-6,xy+3x-7y-9,x^2-x-6y-6\} $ 
is a border basis for the vanishing ideal of the given set of points.
\end{itemize}
\end{example}
 
\begin{remark}
If we remove the condition ``smallest degree'' in algorithm {\sc FG-Border}, 
then the output may be not a border basis. However it is always a quasi border basis. 
Because in each iteration we have  $\BT_{\mathcal{O}}(g_i)\in \partial \mathcal{O}$ 
and $\mathcal{O}$ is a quasi order ideal. For each $t \in \mathcal{O}$ 
and $i\in \{1,\ldots,n\}$, the condition $t/x_i \in \mathcal{O}$ 
implies $t \in \partial(\mathcal{O} \setminus \{t\})$, 
and hence $\mathcal{O} \cup \{\BT_{\mathcal{O}}(g_i)\}$ is a quasi order ideal. 
\end{remark}

Based on algorithm {\sc FG-Border}, we now describe a new algorithm 
that incrementally computes all quasi border pairs for an ideal of points. 

\begin{algorithm}[H]
\caption{{\sc FG-AllQuasiOrderIdeals}}
\label{main:alg}
\begin{algorithmic}[1]
 \STATE {\bf{Input:}} $\mathbb{X}=\{P_1,\ldots,P_s \} \subset \kk^n$
 \STATE {\bf{Output:}} The set of all quasi border pairs for $\II(\mathbb{X})$
 
 \STATE $L:=\emptyset,\mathcal{O}:=\emptyset,G:=\emptyset$
 \STATE $f:=1,d:=1$
 \STATE {\sc QuasiOIStep}$(\mathbb{X},\mathcal{O},d,f,G,L)$
\STATE {\bf{return}}$ (L) $

\end{algorithmic}
\end{algorithm}

Here subalgorithm {\sc QuasiOIStep} is given in Algorithm~8. 
In this algorithm, the function {\sc Interchange}$(L,i,j)$ receives 
a list and integers~$i$ and~$j$, and it interchanges the 
$i$-th and $j$-th elements of~$L$.

\begin{algorithm}[ht]
\caption{{\sc QuasiOIStep}}\label{G:Algorithm}
\begin{algorithmic}[1]
 \STATE {\bf{Input:}} $ \mathbb{X}= \lbrace P_{1}, \ldots,P_{s} \rbrace \subset \kk^{n}$, a quasi order ideal $ \mathcal{O} $, an integer $ d $,  a polynomial $ f \in \R$ in quasi $\mathcal{O}$-border prebasis shape such that $f(P_d)\neq 0$,  a set of polynomials $G$ and a list $L$
 \STATE {\bf{Output:}} An updated list $L$
  \STATE $\mathcal{O}:= \mathcal{O} \cup \lbrace \BT_{\mathcal{O}}(f)\rbrace$;
 \FORALL {$g\in \{h\in G \ | \ h(P_d)\neq 0\} $}
 \STATE $G:=G\setminus \{g\}$
 \STATE $ G:= G \cup\lbrace g-(g(P_d)/f(P_d))\cdot f\rbrace $;
 \ENDFOR{\bf each}
 \STATE $ A:=\{\}$
  \FOR { $  j$ \textbf{from} $ 1 $ \textbf{to} $n $ }
\STATE $ h:=(x_{j}-p_{dj})\cdot f $ where $P_d=(p_{d1},\ldots ,p_{dn})$
\IF {$\BT_{\mathcal{O}}(h) \notin \mathcal{O}$ {\bf and} $\BT_{\mathcal{O}}(h)\notin \BT_{\mathcal{O}}(G)$}
\STATE $h:=${\sc BorderTermDivision}$(h,G)$
\STATE $A:=A \cup \{h\} $
\ENDIF
\ENDFOR
%\STATE $L:=${\sc Autoreduced}$(L)$;
\STATE $G:=G \cup A$
  \IF {$|\mathcal{O}|=s$}
 \STATE $ L:=L\cup \{(\mathcal{O},G)\}$ 
 \ENDIF
 \IF {$|\mathcal{O}|<s$}
 \STATE $d:= \vert \mathcal{O} \vert +1$
 \FOR { $g\in G$ }
 \FOR {$i $ \textbf{from} $ d $ \textbf{to} $ s $}
  \IF {$ g(P_{i})\neq 0 $}
\STATE $ \mathbb{X}:= $ {\sc Interchange} $ (\mathbb{X},i,d) $
 \STATE {\sc QuasiOIStep}$(\mathbb{X},\mathcal{O},d,g,G\setminus \{g\},L) $
 \ENDIF
 \ENDFOR
 \ENDFOR
 \ENDIF
\end{algorithmic}
\end{algorithm}

In order to establish the termination and correctness of this algorithm, 
we state and prove two auxiliary results.

\begin{lemma}\label{quasilem2}
Let $\mathbb{X}\subset \kk^n$ be a finite set of points, $\mathcal{O}$ a quasi order ideal for $\II(\mathbb{X})$ and $G$ its quasi $\mathcal{O}$-border basis for $\II(\mathbb{X})$. Furthermore, let $P\in \kk^n$ be a point so that $P\notin \mathbb{X}$. If $g\in G$ is a polynomial in {\it quasi} $\mathcal{O}$-border prebasis shape with $ g(P)\neq 0 $ and $m=\BT_{\mathcal{O}}(g)$ then $\mathcal{O} \cup \{m\}$ is a quasi order ideal for the ideal of points of $\mathbb{X} \cup \{P\}$.
\end{lemma}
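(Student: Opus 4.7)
My approach is to split the claim into the two parts of Definition~2.5: first verify that $\mathcal{O}\cup\{m\}$ is a quasi order ideal in the combinatorial sense, and then verify that $\II(\mathbb{X}\cup\{P\})$ supports a quasi $(\mathcal{O}\cup\{m\})$-border basis, i.e.\ that the residue classes of $\mathcal{O}\cup\{m\}$ form a $\kk$-basis of $\R/\II(\mathbb{X}\cup\{P\})$.

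For the combinatorial part, I will exploit the fact that $g$ is in quasi $\mathcal{O}$-border prebasis shape, so that $m=\BT_{\mathcal{O}}(g)\in\partial\mathcal{O}$; in particular $m\notin\mathcal{O}$ and $m=x_i t'$ for some $t'\in\mathcal{O}$ and some index $i$. This $t'$ lies in $(\mathcal{O}\cup\{m\})\setminus\{m\}$, which already witnesses $m\in\partial((\mathcal{O}\cup\{m\})\setminus\{m\})$. For an arbitrary existing $t\in\mathcal{O}\setminus\{1\}$, the hypothesis that $\mathcal{O}$ is a quasi order ideal supplies some divisor $t''\in\mathcal{O}\setminus\{t\}$ and some index $j$ with $t=x_j t''$. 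Since $\mathcal{O}\setminus\{t\}\subseteq (\mathcal{O}\cup\{m\})\setminus\{t\}$, this same decomposition establishes $t\in\partial((\mathcal{O}\cup\{m\})\setminus\{t\})$, closing the combinatorial check.

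For the basis part, I plan to combine a cardinality count with a linear independence argument. Cardinality is immediate: from Theorem~2.13 and the hypothesis one has $|\mathcal{O}|=\dim_\kk\R/\II(\mathbb{X})=|\mathbb{X}|$, and since $P\notin\mathbb{X}$, $|\mathcal{O}\cup\{m\}|=|\mathbb{X}|+1=\dim_\kk\R/\II(\mathbb{X}\cup\{P\})$. For linear independence, I will take a candidate relation $\sum_{t\in\mathcal{O}}\lambda_t t+\lambda_m m\in\II(\mathbb{X}\cup\{P\})\subseteq\II(\mathbb{X})$ and reduce it modulo $\II(\mathbb{X})$ using the given identity $m\equiv\sum_{t\in\mathcal{O}}\alpha_t t\pmod{\II(\mathbb{X})}$, which comes from writing $g=m-\sum_{t\in\mathcal{O}}\alpha_t t$. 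The basis property of $\mathcal{O}$ modulo $\II(\mathbb{X})$ will force $\lambda_t=-\lambda_m\alpha_t$ for every $t\in\mathcal{O}$. Substituting back, the relation collapses to $\lambda_m g\in\II(\mathbb{X}\cup\{P\})$, and evaluating at $P$ together with the assumption $g(P)\neq 0$ yields $\lambda_m=0$ and then $\lambda_t=0$ for all $t$.

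The combinatorial check is routine, so the crux will be the second step. The key observation driving it is that, once we mod out by $\II(\mathbb{X})$, any hypothetical dependence in $\mathcal{O}\cup\{m\}$ is forced to be a scalar multiple of $g$ itself — and then the single nonzero value $g(P)$ provides exactly the transition from $\II(\mathbb{X})$ to $\II(\mathbb{X}\cup\{P\})$ that we need.
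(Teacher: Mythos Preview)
Your proof is correct and follows essentially the same approach as the paper: the combinatorial part matches the paper's one-line observation that $m\in\partial\mathcal{O}$ forces $\mathcal{O}\cup\{m\}$ to be a quasi order ideal, and your linear-independence argument is a direct version of the paper's contradiction argument (they assume a nonzero dependency $f$, subtract $g$, and land in $\langle\mathcal{O}\rangle_\kk\cap\II(\mathbb{X})$, whereas you substitute $m\equiv\tail(g)$ to show any dependency must be $\lambda_m g$ and then evaluate at $P$). Your treatment is in fact slightly more careful, since you handle the case $\lambda_m=0$ explicitly and make the cardinality/spanning step explicit, both of which the paper leaves implicit.
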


\begin{proof}
 Since $m\in \partial \mathcal{O}$, the set $\mathcal{O} \cup \{m\}$ is a quasi order ideal. It suffices to show that $\langle \mathcal{O} \cup \{m\} \rangle_{\kk} \cap \II(\mathbb{X} \cup \{P\})=\{0\}$. By reductio ad absurdum, suppose that there is a non-zero polynomial $f=m-\sum_{u\in \mathcal{O}}\alpha_uu$ in $\II(\mathbb{X} \cup \{P\})$. We are sure that this polynomial is not equal to $g$. Let $g=m-\tail(g)$ where $\tail(g)$ is a linear combination of terms in $\mathcal{O}$. Since $f\ne g$, we have $\sum_{u\in \mathcal{O}}\alpha_uu\ne \tail(g)$. On the other hand, the fact that $f$ and $g$ are zero on $\mathbb{X}$ implies that $\sum_{u\in \mathcal{O}}\alpha_uu- \tail(g)\in \II(\mathbb{X}) $. However, this non-zero polynomial belongs to $\langle \mathcal{O} \rangle_{\kk}$, in contradiction to the assumption that $\mathcal{O}$ is a quasi order ideal for $\II(\mathbb{X})$. 
\end{proof}

\begin{lemma}\label{lem:13}
Let $\mathbb{X}=\{ P_{1},\ldots,P_{s}\}$ and let $ \mathcal{O} $ be a quasi order ideal for $ \II(\mathbb{X})$. Further, let $ i \in \lbrace 1,\ldots, s-1 \rbrace $, let $ \mathbb{Y} \subseteq \mathbb{X}$ be a subset such that $\# \mathbb{Y}=i$, and let $\mathcal{O}_i \subseteq \mathcal{O} $ is a quasi order ideal for $ \mathbb{Y}$ so that $|\mathcal{O}_i|=i$. Then, for every $ m \in \partial \mathcal{O}_i \cap \mathcal{O} $, there exists a point $ P \in \mathbb{X} \setminus \mathbb{Y} $ such that $\mathcal{O}_i \cup \{m\} $ is a quasi order ideal for $ \II(\mathbb{Y} \cup \{P\}) $. 
\end{lemma}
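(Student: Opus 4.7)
The plan is to translate everything into statements about evaluation vectors and then locate the desired point $P$ by a rank/linear-algebra argument, exactly in the spirit of the Buchberger--M\"oller setup in Section~3.

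First I would recall that, since $\mathcal{O}_i$ is a quasi order ideal for $\II(\mathbb{Y})$ with $|\mathcal{O}_i|=|\mathbb{Y}|=i$, the residue classes of $\mathcal{O}_i$ form a $\kk$-basis of $\R/\II(\mathbb{Y})$, so the evaluation matrix $M_{\mathbb{Y}}:=(t(Q))_{t\in\mathcal{O}_i,\,Q\in\mathbb{Y}}\in\kk^{i\times i}$ is invertible. Similarly, because $m\in\mathcal{O}$ and $\mathcal{O}_i\cup\{m\}\subseteq\mathcal{O}$, and $\mathcal{O}$ is a quasi order ideal for $\II(\mathbb{X})$, the enlarged evaluation matrix $M_{\mathbb{X}}:=(t(Q))_{t\in\mathcal{O}_i\cup\{m\},\,Q\in\mathbb{X}}\in\kk^{(i+1)\times s}$ has $\kk$-linearly independent rows.

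Next, since the row of $M_{\mathbb{X}}$ indexed by $m$, restricted to the $\mathbb{Y}$-columns, lies in the row span of $M_{\mathbb{Y}}$, I can find coefficients $\alpha_t\in\kk$ with $m(Q)=\sum_{t\in\mathcal{O}_i}\alpha_t\,t(Q)$ for every $Q\in\mathbb{Y}$. Set $f:=m-\sum_{t\in\mathcal{O}_i}\alpha_t\,t$. Then $f$ vanishes on $\mathbb{Y}$, but $f\notin\II(\mathbb{X})$, because $f\in\langle\mathcal{O}\rangle_{\kk}$ and $\mathcal{O}$ is a quasi order ideal for $\II(\mathbb{X})$, forcing $f=0$ on $\mathbb{X}$ to imply $f=0$, which contradicts the monic border term $m$. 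Consequently there exists $P\in\mathbb{X}\setminus\mathbb{Y}$ with $f(P)\neq 0$; this is the point I am after.

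It remains to verify that $\mathcal{O}':=\mathcal{O}_i\cup\{m\}$ is a quasi order ideal for $\II(\mathbb{Y}\cup\{P\})$. The combinatorial condition is immediate: $m\in\partial\mathcal{O}_i$ gives $m\in\partial(\mathcal{O}'\setminus\{m\})$, and for $t\in\mathcal{O}_i\setminus\{1\}$ we still have $t\in\partial(\mathcal{O}_i\setminus\{t\})\subseteq\partial(\mathcal{O}'\setminus\{t\})$. For the $\kk$-basis condition, since $\dim_{\kk}\R/\II(\mathbb{Y}\cup\{P\})=i+1=|\mathcal{O}'|$, it suffices to show linear independence of the residue classes. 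So assume $h:=\gamma m+\sum_{t\in\mathcal{O}_i}\beta_t t\in\II(\mathbb{Y}\cup\{P\})$. If $\gamma=0$, then $h\in\langle\mathcal{O}_i\rangle_{\kk}\cap\II(\mathbb{Y})=\{0\}$, whence all $\beta_t=0$. If $\gamma\neq 0$, normalize to $\gamma=1$; then $h-f=\sum_{t\in\mathcal{O}_i}(\beta_t+\alpha_t)t$ vanishes on $\mathbb{Y}$, hence is $0$ by the previous case, so $h=f$, contradicting $f(P)\neq 0=h(P)$.

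The only subtle step is the middle one, namely producing $f$ and showing $f\notin\II(\mathbb{X})$; everything else is bookkeeping. I expect the main obstacle to be the correct bookkeeping between the three ambient sets of points ($\mathbb{Y}$, $\mathbb{Y}\cup\{P\}$, $\mathbb{X}$) and making sure that the hypothesis ``$\mathcal{O}_i\subseteq\mathcal{O}$'' is used at precisely the right moment to guarantee $f\notin\II(\mathbb{X})$.
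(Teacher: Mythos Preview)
Your argument is correct and follows essentially the same route as the paper: construct the polynomial $f=m-\sum_{t\in\mathcal{O}_i}\alpha_t t\in\II(\mathbb{Y})$, observe that $f\in\langle\mathcal{O}\rangle_{\kk}\setminus\{0\}$ forces $f\notin\II(\mathbb{X})$, pick $P\in\mathbb{X}\setminus\mathbb{Y}$ with $f(P)\neq 0$, and conclude. The only difference is cosmetic: the paper invokes Lemma~\ref{quasilem2} for the final step, whereas you inline that verification (the combinatorial check on $\mathcal{O}'$ and the linear-independence argument for $\R/\II(\mathbb{Y}\cup\{P\})$) directly.
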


\begin{proof}
Without loss of generality, suppose that $ \mathbb{Y}=\{ P_{1},\ldots,P_{i}\} $. Since $ m \in \partial \mathcal{O}_i \cap \mathcal{O}$, there exists a polynomial $m-\sum_{u\in \mathcal{O}_{i}}\alpha_uu \in \II(\mathbb{Y})$  where $\alpha_u \in \kk$. Two cases may occur: If there exists $ j \in \lbrace i+1, \ldots ,s \rbrace $, so that $ g(P_j) \neq 0 $, then Lemma \ref{quasilem2} yields that  $\mathcal{O}_i \cup \{m\}$ is a quasi order ideal for $ \II(\mathbb{Y} \cup \{P_j\}) $. Otherwise, we have $ g(P_j)=0 $ for $ j=i+1,\ldots,s $. However, we also have $ g(P_1)= \cdots =g(P_i)=0 $ and therefore $ g $ represents a linear dependency between the elements of $ \mathcal{O} $. This contradicts the fact that  $ \mathcal{O} $ is a quasi order ideal for $ \II(\mathbb{X}) $. This finishes the proof.
\end{proof}

\begin{theorem}
Algorithm {\sc FG-AllQuasiOrderIdeals} terminates and computes all quasi border pairs 
for a given ideal of points.
\end{theorem}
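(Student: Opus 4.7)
The plan is to split the argument into termination, soundness (every pair output lies in~$L$ is indeed a quasi border pair for $\II(\mathbb{X})$), and completeness (every quasi border pair for $\II(\mathbb{X})$ appears in~$L$). Termination is easy: each invocation of {\sc QuasiOIStep} enlarges $\mathcal{O}$ by the single new term $\BT_{\mathcal{O}}(f)$, so the recursion stops as soon as $|\mathcal{O}|=s$, and each call issues only finitely many recursive sub-calls thanks to the nested {\bf for}-loops over $g\in G$ and $i\in\{d,\dots,s\}$.

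For soundness I would induct on the recursion depth, maintaining the invariants (i) $\mathcal{O}$ is a quasi order ideal for the ideal of points $\II(\{P_1,\dots,P_{|\mathcal{O}|}\})$, (ii) every $h\in G$ is in quasi $\mathcal{O}$-border prebasis shape with $\BT_{\mathcal{O}}(h)\in\partial\mathcal{O}$ pairwise distinct, and (iii) every $h\in G$ vanishes on $P_1,\dots,P_{|\mathcal{O}|}$. The update $g\mapsto g-(g(P_d)/f(P_d))f$ in lines~4--7 makes each modified polynomial vanish at $P_d$, while preserving quasi prebasis shape because $\BT_{\mathcal{O}}(f)$ is swallowed into the new $\mathcal{O}':=\mathcal{O}\cup\{\BT_{\mathcal{O}}(f)\}$; that $\mathcal{O}'$ is a quasi order ideal is a direct application of Lemma \ref{quasilem2} (after passing to $\mathcal{O}\cup\{\BT(f)\}$ which remains contained in a quasi order ideal for the full~$\II(\mathbb{X})$). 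The loop of lines~9--14 then adds the new border neighbours $(x_j-p_{dj})f$, reduced by {\sc BorderTermDivision}, to restore the prebasis shape with respect to $\mathcal{O}'$. When the recursion reaches $|\mathcal{O}|=s$, the quasi order ideal~$\mathcal{O}$ has the right cardinality $s=\dim_{\kk}\R/\II(\mathbb{X})$ and its elements are linearly independent modulo~$\II(\mathbb{X})$ (otherwise a non-trivial dependency would have been detected earlier by some $g(P_i)=0$ everywhere, contradicting that each introduced $f$ satisfies $f(P_d)\neq 0$), so $\mathcal{O}$ supports a quasi border basis and $G$ is it.

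For completeness, fix an arbitrary quasi border pair $(\mathcal{O},G)$ for $\II(\mathbb{X})$ with $\mathcal{O}=\{m_1,\dots,m_s\}$; I want to exhibit a sequence of choices of pivots and of point re-orderings that makes the algorithm build exactly this pair. I would construct, by induction on $i=0,1,\dots,s$, a sub-quasi-order-ideal $\mathcal{O}_i\subseteq\mathcal{O}$ with $|\mathcal{O}_i|=i$ together with a point permutation $P_{\sigma(1)},\dots,P_{\sigma(i)}$ such that $\mathcal{O}_i$ is a quasi order ideal for $\II(\{P_{\sigma(1)},\dots,P_{\sigma(i)}\})$ and such that the algorithm can reach the state $(\mathcal{O}_i,G_i)$, where $G_i$ is the uniquely determined quasi $\mathcal{O}_i$-border basis of this smaller ideal of points. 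The inductive step is exactly Lemma~\ref{lem:13}: given $\mathcal{O}_i$ and any $m\in \partial\mathcal{O}_i\cap\mathcal{O}$, there exists an as-yet-unused point $P\in\mathbb{X}\setminus\{P_{\sigma(1)},\dots,P_{\sigma(i)}\}$ for which $\mathcal{O}_i\cup\{m\}$ is a quasi order ideal of $\II(\{P_{\sigma(1)},\dots,P_{\sigma(i)},P\})$. Choose $m=m_{i+1}$ (ordered so that $\mathcal{O}_{i+1}:=\mathcal{O}_i\cup\{m_{i+1}\}\subseteq\mathcal{O}$ is still a quasi order ideal, which is possible because every non-unit element of a quasi order ideal lies in the border of the complement) and pick such a $P$; the {\sc Interchange} step lets the algorithm swap this $P$ into position $d=i+1$, and the outer {\bf for}-loop over $g\in G_i$ will in particular try the unique $g\in G_i$ with $\BT_{\mathcal{O}_i}(g)=m_{i+1}$, which satisfies $g(P)\neq 0$ precisely by Lemma~\ref{lem:13}. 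Thus the recursion proceeds to state $(\mathcal{O}_{i+1},G_{i+1})$, and after $s$ steps it appends $(\mathcal{O},G)$ to~$L$.

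The main obstacle is this completeness step: one has to argue that the orderings of $\mathcal{O}$ refining its divisibility partial order into a sequence of quasi order ideals actually exist (i.e.\ that at each level some $m\in \mathcal{O}\setminus\mathcal{O}_i$ lies in $\partial\mathcal{O}_i$), and that the pivot $g$ chosen by the algorithm really reproduces $m_{i+1}$ as its border term rather than some other border element. The first point follows from the defining property of a quasi order ideal (every $t\in\mathcal{O}\setminus\{1\}$ lies in $\partial(\mathcal{O}\setminus\{t\})$), applied to a minimal element of $\mathcal{O}\setminus\mathcal{O}_i$ with respect to divisibility. The second is handled by observing that since $G_i$ is the quasi border basis of the smaller ideal, there is a unique $g\in G_i$ with $\BT_{\mathcal{O}_i}(g)=m_{i+1}$, and the algorithm's loop over all $g$ and all candidate indices~$i\geq d$ guarantees that this specific choice will be explored in some branch of the recursion.
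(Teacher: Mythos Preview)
Your proposal is correct and follows essentially the same approach as the paper: termination via bounded recursion depth and finite branching, soundness via the invariants maintained by Lemma~\ref{quasilem2}, and completeness via Lemma~\ref{lem:13}. The only organizational difference is in the completeness step: the paper inducts on the number of points~$s$, removing a maximal-degree term~$m$ from~$\mathcal{O}$ and invoking Lemma~\ref{lem:13} to obtain a suitable $\mathbb{Y}\subset\mathbb{X}$ of size~$s$ for which $\mathcal{O}\setminus\{m\}$ works; you instead build $\mathcal{O}_0\subset\mathcal{O}_1\subset\cdots\subset\mathcal{O}_s=\mathcal{O}$ forward, invoking Lemma~\ref{lem:13} at each step to extend both the quasi order ideal and the point set simultaneously. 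Your forward construction is arguably cleaner, since the paper's backward step really needs the same bottom-up construction implicitly (Lemma~\ref{lem:13} as stated produces a \emph{point} given a subset, not a subset given a sub-order-ideal).

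One small point to tighten: your invariant~(ii) asserts that the border terms of elements of~$G$ are pairwise distinct in~$\partial\mathcal{O}$, but to conclude at depth~$s$ that~$G$ is the full quasi $\mathcal{O}$-border basis you also need $\BT_{\mathcal{O}}(G)=\partial\mathcal{O}$, i.e.\ that every border element is hit. This follows easily from the loop in lines~9--15 (each new border neighbour $x_j\cdot\BT_{\mathcal{O}}(f)$ not already covered is added), but you should state it as part of the invariant.
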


\begin{proof}
First we show that the instructions can be executed. This means that the procedure is 
well-defined. For this purpose, it is enough to show that all polynomials $h$ in Step~10 
and $g$ in Step~22 of Algorithm~8 have an $\mathcal{O}$- border term, i.e., that they are in 
quasi $\mathcal{O}$-border prebasis shape w.r.t.\ the current quasi order ideal~$\mathcal{O}$. 
At first we have $f=1$ and $\BT_{\mathcal{O}}(f)=1$ with respect to $\mathcal{O}=\{\}$. 
Thus the claim is obviously true. In Step~22 of Algorithm~8 we choose $g \in G$ 
and use~$g$ as the new input polynomial~$f$. For the next iteration of Alögorithm~8,  
we add new elements to the set~$G$ in Steps~6 and~16. In the first case we have 
$\BT_{\mathcal{O}}(g-(g(P_d)/f(P_d))\cdot f)=\BT_{\mathcal{O}}(g)$ for every 
$g\in \{h\in G \ | \ h(P_d)\neq 0\} $ and the input polynomial~$f$, 
because we use $\mathcal{O}:= \mathcal{O} \cup \{ \BT_{\mathcal{O}}(f)\}$ 
in Step~3. On the other hand, every $h \in A$ in Step~13 comes from the 
{\sc BorderTermDivision} algorithm. Thus we can conclude that all elements 
of~$G$ are in quasi $\mathcal{O}$-border prebasis shape. 

Next we show that, for every polynomial~$f$ which we use in Algorithm~8, 
we have $f(P_d)\neq 0$. This is true because we have this property for 
the polynomial~$f$ at the beginning (Step 4 of Algorithm~7). 
Also, every time we apply Algorithm~8 recursively, we only apply it to  
a polynomial~$g$ such that $g(P_d)\neq 0$ in Step 24 of Algorithm~8. 
This polynomial~$g$ will be the polynomial~$f$ in the next iteration.

The termination of the algorithm is guaranteed by the fact that $\II(\mathbb{X})$ 
is zero-dimensional. More precisely, if we visualize the computation like a 
tree graph, then at each node the number of branches is finite, 
namely the  cardinality of~$G$ (using the notations of Algorithm~8). 
Moreover, the number of nodes in each branch is bounded by~$s$. 
This implies the termination of the algorithm. 

To prove the correctness of the algorithm, we note that by Lemma \ref{quasilem2} 
and the structure of the algorithm, for each pair $(\mathcal{O},G)$ in~$L$, 
the set $\mathcal{O}$ is a quasi order ideal and~$G$ is the  quasi 
$\mathcal{O}$-border basis for $\II(\mathbb{X})$.
Hence every pair $(\mathcal{O},G)$ in the output is a quasi border pair for $\II(\mathbb{X})$.
  
Now, conversely, we show that every quasi border pair $(\mathcal{O},G)$ 
for $\II(\mathbb{X})$ will be found in~$L$. We proceed by induction on $s$ to show that the  pair $(\mathcal{O},G) $ appears in~$L$. For $s=1$, the assertion is clear. 
Now suppose the assertion holds for~$s$. Let $\mathbb{X}=\{P_1,\ldots ,P_{s+1}\}$, 
and let $(\mathcal{O},G)$ be a quasi border pair for $\II(\mathbb{X})$. 
Let $m$ be a term of maximal degree term in~$\mathcal{O}~$, and let 
$\mathcal{O}' = \mathcal{O} \setminus \{m\}$. 
By Lemma~\ref{lem:13}, there exists the set $\mathbb{Y} \subseteq \mathbb{X} $ 
such that $\mathcal{O}'$ is a quasi order ideal for $\II(\mathbb{Y})$. 
Let $G'$ be the corresponding quasi border basis. Then, by the induction hypothesis, 
the algorithm finds $ (\mathcal{O}',G') $. Let $\mathbb{X} \setminus \mathbb{Y}=\{P\} $. 
Since $ m \in \partial \mathcal{O}' $, there exists the polynomial 
$g'= m-\sum_{u\in \mathcal{O}'}\alpha_uu \in G' $ with $ \alpha _{u} \in \kk $. We note that 
$ g'(P)\neq 0 $, since otherwise $g'$ represents a linear dependency  
between the elements of $\mathcal{O}$ which yields a contradiction. 
Since $g'\in G'$, it is selected in the {\bf for}-loop in Algorithm~8, 
and so~$m$ is added to~$\mathcal{O}'$. This proves the correctness of the algorithm. 
\end{proof}

Let us illustrate the performance of Algorithm~7 by a simple example.

\begin{example}
Let~$\mathbb{X}$ be the set of points $\mathbb{X} = \{(2,3), (5,6), (1,2)\}$
in~$\mathbb{Q}^2$, and let us compute a quasi border basis for $\II(\mathbb{X})$. 
We begin with the pair $(\mathcal{O},G)$, where $\mathcal{O} = \{1,y\}$ and  
$G = \{g_1,g_2,g_3\}$ with $g_1=xy-8y+18$, $g_2=y^2-9y+18$, and $g_3=x+1-y$.
This is a border pair for the ideal of points of $\{\{ (2,3),(5,6)\}$.  
Since $ g_{1}(1,2)\neq 0 $,  we set $\mathcal{O}' = 
\mathcal{O}\cup \{\BT_{\mathcal{O}}(g_{1})\}=\{1,y,xy\}$. 
Therefore $\mathcal{O}'$ is a quasi order ideal 
for $\text{I}(\{ (2,3),(5,6), (1,2) \})$, and the corresponding 
quasi border basis is $ G'=\{ x^2y-9xy+26y-36, xy^2-10xy+26y-36, x+1-y, y^2-xy-y \}$.
Finally, the set of all quasi order ideals for $\mathbb{X}$ is equal to $\{ \{1,x,x^2\}, 
\{1,x,xy\}, \{1,y,y^2\}, \{1,y,xy\} \}$. 
\end{example}

Note that, by using algorithm {\sc FG-AllQuasiOrderIdeals}, we find~1669 
different quasi order ideals for the ideal
$$ 
\II(\lbrace(0,0,0,1), (1,0,0,2), (3,0,0,2),(5,0,0,3),(-1,0,0,4),(4,4,4,5),(0,0,7,6)\rbrace). 
$$
Among them, only 55 sets are order ideals.

\begin{remark}
If we replace "{\bf for} $t$ {\bf in} $S$ {\bf do}" by "{\bf for} $t$ {\bf in} $ \partial \mathcal{O}$ {\bf do}" in algorithm {\sc AllOIStep},
we obtain all quasi border pairs of the input ideal. We call this new algorithm 
{\sc BM-AllQuasiOrderIdeals}, and in the next section, we compare it to {\sc FG-AllQuasiOrderIdeals}. 
\end{remark}

%%%%%%%%%%%%%%%%%%%%%%%%%%%%%%%%%%%%%%
%
% Section 6: Experimental Results
%
%%%%%%%%%%%%%%%%%%%%%%%%%%%%%%%%%%%%%%

\section{Experimental Results}\label{timing}

Both Algorithms~\ref{Allordermain} and~\ref{main:alg} have been implemented 
by us in {\sc Maple 2015}. In this section we discuss the efficiency of these 
implementations via a set of benchmarks. For our tests, we consider different 
kinds of sets of points, e.g. complete intersections,  generic sets of points and points on a rational space curve. The results are shown in the following tables where the time and memory columns give, respectively, the consumed CPU time in seconds and the amount of megabytes of memory used by the corresponding algorithm. The last two columns represent, respectively, 
the number of branches and (quasi) order ideals computed by the corresponding algorithm.
All experiments were run on a machine with a 2.40 GHz Intel(R) Core(TM) i7-5500U CPU
and 8~GB of memory.

In Table~\ref{time-BM}, we summarize the results of 
running lgorithm {\sc BM-AllOrderIdeals} on different sets of points.

\setlength{\tabcolsep}{15pt}
\begin{table}[H]
\centering
\caption{ Computing all border pairs}\label{time-BM}
\begin{adjustbox}{max width=\textwidth}
{\footnotesize  
\setlength{\extrarowheight}{3.5pt}
\begin{tabular}{|c||c|c|c|c|c|}
\cline{1-6}
$ \mathbb{X} $  & $\mathbb{A}^n_{\kk_{\mathstrut}}$  & time  &  memory  & $\# $branches & $ \# $order ideals  \\  \cline{1-6} 

\noalign{\vskip-10pt}  & & & & & \\
\cline{1-6}

5 random & $\mathbb{F}_{32003_{\mathstrut}}^{4}$ & 4.33 & 431.98 & 412 & 59  \\ \cline{1-6}
7 random & $\mathbb{F}_{32003_{\mathstrut}}^{2}$ & 1.46 & 168.51 & 230 & 13   \\ \cline{1-6}
8 on twisted cubic & $\mathbb{F}_{32003_{\mathstrut}}^{3}$ &  59.07 & 4365.14 & 2370 & 38  \\ \cline{1-6}
8 complete int. & $\mathbb{F}_{2_{\mathstrut}}^{3}$ & 1.69 & 152.78 & 48 & 1    \\ \cline{1-6}
9 complete int.\ & $\mathbb{F}_{11_{\mathstrut}}^{2}$ & 0.81 & 108.33 & 42 & 1    \\ \cline{1-6}
9 random & $\mathbb{F}_{32003_{\mathstrut}}^{2}$ &  46.81 & 2876.51 & 2618 & 28    \\ \cline{1-6}

\end{tabular}
}
\end{adjustbox}
\end{table}

\setlength{\tabcolsep}{15pt}
\begin{table}[H]
\caption{ Computing all quasi border pairs} \label{tab1}
 \begin{adjustbox}{max width=\textwidth}
 \setlength{\extrarowheight}{2pt}
\begin{tabular}{|c||c|c|l|l|}
\hline
4 random points in $\mathbb{F}_{32003}^{2} $ & time &  memory &  $ \# $branches  & $ \# $quasi order ideals   \\ \hline
{\sc FG-AllQuasiOrderIdeals} & 0.98  & 5.39 & \multirow{2}{*}{\quad \quad 22}  & \multirow{2}{*}{\quad \quad \quad \quad 13} \\  \cline{1-3} 
{\sc BM-AllQuasiOrderIdeals}  & 0.06 & 6.68 & & \\ \hline
  \multicolumn{1}{c}{}  \\ \cline{1-5}
6 random points in $\mathbb{F}_{32003}^{2} $ & time &  memory &  $ \# $branches  & $ \# $quasi order ideals   \\ \hline
{\sc FG-AllQuasiOrderIdeals} & 22.12  & 184.25 & \multirow{2}{*}{\quad \quad  478}  & \multirow{2}{*}{\quad \quad \quad \quad 96} \\  \cline{1-3} 
{\sc BM-AllQuasiOrderIdeals}  &2.58 & 245.66 & & \\ \hline
  \multicolumn{1}{c}{}  \\ \cline{1-5} 
  
type (2,3) complete int. in $\mathbb{F}^{2}_{7} $ & time &  memory &  $ \# $branches  & $ \# $quasi order ideals   \\ \hline
{\sc FG-AllQuasiOrderIdeals} & 0.11  & 7.21 & \multirow{2}{*}{\quad \quad  35}  & \multirow{2}{*}{\quad \quad \quad \quad 4} \\  \cline{1-3} 
{\sc BM-AllQuasiOrderIdeals}  & 0.20 & 23.85 & & \\ \hline

  \multicolumn{1}{c}{}  \\ \cline{1-5} 
  
type (2,2,2) complete int. in $\mathbb{F}^{3}_{2} $ & time &  memory &  $ \# $branches  & $ \# $quasi order ideals   \\ \hline
{\sc FG-AllQuasiOrderIdeals} &   5.08  & 395.12 & \multirow{2}{*}{\quad \,  1020}  & \multirow{2}{*}{\quad \quad \quad \quad 1} \\  \cline{1-3} 
{\sc BM-AllQuasiOrderIdeals}  & 24.04 & 1726.10 & & \\ \hline

  \multicolumn{1}{c}{}  \\ \cline{1-5}
type (3,3) complete int.\ in $\mathbb{F}^{2}_{11} $ & time &  memory &  $ \# $branches  & $ \# $quasi order ideals   \\ \hline
{\sc FG-AllQuasiOrderIdeals} &  17.48  & 615.72 & \multirow{2}{*}{\quad \quad 2368 }  & \multirow{2}{*}{\quad \quad \quad \quad 13} \\  \cline{1-3} 
{\sc BM-AllQuasiOrderIdeals} & 55.27 &2555.12 & & \\ \hline
  \multicolumn{1}{c}{}  \\ \cline{1-5} 
type (3,3) complete int.\ in $\mathbb{F}^{2}_{11} $ & time &  memory &  $ \# $branches   & $ \# $quasi order ideals   \\ \hline
{\sc FG-AllQuasiOrderIdeals} &   114.61  & 1935.25 & \multirow{2}{*}{\quad \quad 3768	 }  & \multirow{2}{*}{\quad \quad \quad \quad 45 } \\  \cline{1-3} 
{\sc BM-AllQuasiOrderIdeals}  &  107.12  & 4567.55136 & & \\ \hline

\end{tabular}
\end{adjustbox}
\end{table}

In the above tables, for example if we look at the first row of Table~2, 
we compute 22 branches to calculate all quasi order ideals for the vanishing ideal 
of the given set of points. But among them there are some repeated results. 
After removing them, we find only 13 different quasi order ideals. 
Moreover, the last two examples in Table~2 show an interesting behavior of quasi
order ideals: for the complete intersection $\langle x(x-1)(x-3),\, 
y(y-1)(y-2)\rangle$ and the complete intersection $\langle x(x-2)(x-7),\,
(y-1)(y-3)(y-5)\rangle$ in~$\mathbb{F}^2_{11}$, there exists one 
order ideal for which they have a border basis, namely 
$\{1,x,y,x^2,xy,y^2,xy^2,y^2x,x^2y^2\}$, but widely different numbers of 
quasi order ideals.

\section*{Acknowledgements.}    The research of the second author 
was in part supported by a grant from IPM (No. 94550420).

\bibliographystyle{plain}
\bibliography{CompAllBB}
\bigbreak

\end{document}